\newtheorem{thm}{Theorem}[section]
 \newtheorem{prop}[thm]{Proposition}
 \newtheorem{lem}[thm]{Lemma}
 \newtheorem{cor}[thm]{Corollary}
  \newtheorem{que}{Question}
  \theoremstyle{claim}
  \newtheorem{clm}{Claim}
\theoremstyle{definition}
\newtheorem{exm}[thm]{Example}
\newtheorem{dfn}[thm]{Definition}
 \newtheorem{rmk}[thm]{Remark}
\def \tp {{\rm tp}}
\def \Sem {{\rm Sem}}
\def \Th {{\rm Th}}
\def \cl {{\rm cl}}
 \newcommand{\lems}{\mathrel{\reflectbox{\ensuremath{\mapsto}}}}
\newenvironment{littleproof}{{\em Proof.}}{\hfill$\bullet$}
\def  \phiar {\makebox[2em]{\makebox[.6em]{$\longrightarrow$}\!\!\!\!$^{^{\phi}}$}}
\def  \phimap {\makebox[2em]{\makebox[.6em]{$\longmapsto$}\!\!\!\!$^{^{\phi}}$}}
\def  \psiar {\makebox[2em]{\makebox[.6em]{$\longrightarrow$}\!\!\!\!$^{^{\psi}}$}}
\def  \psimap {\makebox[2em]{\makebox[.6em]{$\longmapsto$}\!\!\!\!$^{^{\psi}}$}}
\def  \psiprimar {\makebox[2em]{\makebox[.6em]{$\longrightarrow$}\!\!\!\!$^{^{\psi'}}$}}
\def  \piar {\makebox[2em]{\makebox[.6em]{$\longrightarrow$}\!\!\!\!$^{^{\varphi}}$}}
\def  \leqmap {\makebox[2em]{\makebox[.6em]{$\longmapsto$}\!\!\!\!$^{^{\leq}}$}}
\begin{document}

\title{Semi-isolation and  the strict order property}

\author{Sergey Sudoplatov\thanks{Supported by RFBR (grant 12-01-00460-a)}\\Sobolev Institute of Mathematics, Novosibirsk \\
\and Predrag Tanovi\'c\thanks{Supported by the Ministry of Science
and Technology of Serbia}\\Mathematical Institute SANU, Belgrade}

\maketitle

\begin{abstract}
We study semi-isolation as a binary relation on the locus of a
complete type and prove that under some additional assumptions it induces the strict order property.
\end{abstract}

Throughout the    paper   $T$ is a fixed,  complete, first-order
theory in a countable language   and    $M$ is its (infinite)
monster model. $T$ is an  \emph{Ehrenfeucht theory} if it has
finitely many, but more than one,  countable models. The class of Ehrenfeucht theories is quite interesting. There are
numerous results and large bibliography in this area, see
\cite{BSV, S2} for references. The first
example was found by Ehrenfeucht in \cite{V}:
$T_E=\Th(\mathbb Q,<,n)_{n\in \omega}$. It eliminates quantifiers
and has three countable models: the prime model, the saturated
model, and the model prime over a realization of a nonisolated
type. $T_E$ is also a \emph{binary theory}: every formula is equivalent
modulo $T_E$ to a Boolean combination of formulas with at most two
free variables. Not all Ehrenfeucht theories are binary:
non-binary examples can be found in \cite{Per} and \cite{W1}. The
motivating question for our work is:

\begin{que}\label{Q1}
Is there a binary, Ehrenfeucht theory without the strict order
property? In particular, is there such a theory with 3 countable
models?
\end{que}

An important relation in any Ehrenfeucht theory is  semi-isolation
as a binary relation on the locus of a powerful type $p\in
S(\emptyset)$ in a model of $T$ (all these notions are defined in
Section \ref{preli}). There the semi-isolation relation  is either
empty (if $p$ is omitted) or   a $\bigvee$-definable
quasi-order with no maximal elements. If in addition  $T$  has
precisely 3 countable models then the isomorphism type of any
countable model $N$  can be described by combinatorial properties
of the quasi-order:

\begin{enumerate}
\item $N$  is prime iff $p(N)=\emptyset$;

\item $N$ is prime over a realization of $p$ iff there is a
minimal, with respect to  semi-isolation,  element in $p(N)$. In
this case $N$ is prime over any minimal element;

\item $N$ is saturated iff $p(N)\neq\emptyset$ has no minimal
elements.
\end{enumerate}

We note that in  Ehrenfeucht's example the type
$\{n<x\,|\,n\in\omega\}$ determines a complete 1-type $p$ on whose
locus, in any countable model, the semi-isolation (defined later
and denoted by $SI_p$) coincides with $\leq$\,. In particular,
semi-isolation is a relatively definable relation on the locus of
$p$. The strict order property in this example is induced by the
semi-isolation and it is natural to examine whether this will
happen in any binary Ehrenfeucht theory.

\smallskip
One result in this direction    was obtained by Woodrow in
\cite{W}. He proved that if a theory in the language of the
Ehrenfeucht's example eliminates quantifiers and has 3 countable
models then it is quite similar to the  original one; in
particular,  semi-isolation is a relatively definable ordering on the locus of a powerful type. Ikeda,
Pillay and Tsuboi proved that the same happens in the case of an
almost $\aleph_0$-categorical theory with 3 countable models, see
\cite{IPT}. Another result in this direction was obtained by Pillay  in \cite{P1}
who proved that in any Ehrenfeucht  theory with few links there exists a definable  linear ordering. The ordering relation that he found, when restricted to the locus of a powerful type, is induced by the semi-isolation relation.

\smallskip
In this article we will investigate proper quasi-orders of the
form $(p(M), SI_p)$, where $p\in S(\emptyset)$ is a nonisolated type in an arbitrary first-order theory and  prove that under some additional assumptions   a relatively definable sub-order can be found. The additional assumptions have topological flavour. That is not surprising because  $SI_p$ has a natural topological "definition", the set $S^p_{\rightarrow}$. More precisely, we will consider the set $S_{p,p}$ of all complete extensions of $p(x)\cup p(y)$; it is compact and corresponds to set of all pairs of realizations of $p$. Similarly, $SI_p$ corresponds to the set $S^p_{\rightarrow}$ of all types $\tp(a,b)$ where $(a,b)\in SI_p$. We will decompose $S_{p,p}$ into four   parts, adequate for studying definability properties of $SI_p$ (see Definition \ref{DS} and Remark \ref{RS}). Then we will translate definability properties of semi-isolation  into topological (complexity) properties of these parts.

 In Section \ref{S2} we will prove that certain assumptions on the complexity imply the existence of a proper, relatively definable sub-order of $SI_p$.  For example, we will prove in Theorem \ref{Tpd}
that if the theory $T$ has {\em  closed asymmetric links on $p(M)$}  (meaning that one of the parts, the set $S^p_{\mapsto}$,  is  non-empty and closed in $S_{p,p}$) then there exists a non-trivial, relatively definable    sub-order of $SI_p$.  This generalizes Pillay's result in one direction: if   $p$ is a powerful type of an  Ehrenfeucht theory with few links  then $S^p_{\mapsto}$ is finite (hence closed) and non-empty.

In Sections \ref{S3} and \ref{S4} we concentrate on the existence of antichains in $SI_p$ in the case of an NSOP theory. We don't do much in this direction:  assuming that the underlying theory is binary, NSOP and has three countable models, with lots of efforts  we prove that there are at least two distinct types of $SI_p$-incomparable pairs of elements on the locus of a powerful type. This indicates that the answer to Question \ref{Q1} may be affirmative.

In Section \ref{S5} we  consider a powerful  type $p$ in a binary theory  for which $SI_p$ is downwards directed in a specific way (PGPIP). We prove that in the NSOP case  the Cantor-Bendixson rank of $S_{p,p}$ is finite; this indicates that maybe there are no binary, Ehrenfeucht, NSOP theories with PGPIP at all. So the answer to Question \ref{Q1} may be  negative!?

\section{Preliminaries}\label{preli}

\medskip
Throughout the paper  $S_n(A)$ denotes the  set of all complete
$n$-types with parameters from $A$. The topology on $S_n(A)$ is
defined in a usual way. If $\phi(\bar x)$ is a formula over $A$ in $n$ free variables then by $[\phi]$ we will denote the set of all types from $S_n(A)$ containing $\phi(\bar x)$.
$S(A)$ denotes $\bigcup_{n}S_n(A)$. If
$p,q\in S(\emptyset)$ then $S_{p,q}(\emptyset)$ is the subspace of
all the extensions of $p(\bar x)\cup q(\bar y)$ in
$S_m(\emptyset)$ (where $\bar x$ and $\bar y$ are disjoint and
$m=|\bar x|+|\bar y|$). Similarly, if $q\in S_n(\emptyset)$ then
$S_q(A)$ denotes the set of all completions of $q(\bar x)$ in
$S_n(A)$. For any $\bar c$ realizing $p$ there is a canonical
homeomorphism between $S_{p,q}(\emptyset)$ and $S_{q}(\bar c)$:
the one sending $r(\bar x,\bar y)$ to $r(\bar c,\bar y)$.

\smallskip
Next we recall the definition of the Cantor-Bendixson rank. It is
defined on the elements of a topological space $X$ by induction:
$CB_X(p)\geq 0$ for all $p\in X$; \ \ $CB_X(p)\geq\alpha$ iff  for
any $\beta<\alpha$ $p$ is an accumulation point of the points of
$CB_X$-rank at least $\beta$. $CB_X(p)=\alpha$ \ iff \ both
$CB_X(p)\geq\alpha$ and $CB_X(p)\ngeq\alpha+1$ hold; if   such an
ordinal $\alpha$ does not exist then $CB_X(p)=\infty$.   Isolated
points of $X$ are precisely those having rank $0$, points of rank
$1$ are those which are isolated in the subspace of all
non-isolated points, ... For a non-empty $C\subseteq X$  we define
$CB_X(C)=\sup\{CB_X(p)\,|\,p\in C\}$; in this way $CB_X(X)$ is
defined and $CB_X(\{p\})=CB_X(p)$ holds.  If $X$ is compact and
$C$ is closed in $X$ then the sup is achieved: $CB_X(C)$ is the
maximum value of $CB_X(p)$ for $p\in C$; there are  finitely many
points of maximum rank in $C$ and the number of such points is the
\emph{$CB_X$-degree} of $C$. If $X$ is countable and compact then
$CB_X(X)$ is a countable ordinal and every  closed subset has
ordinal-valued rank and finite $CB_X$-degree.

$S_n(A)$ is compact so  $CB$-rank is defined   there on points
(complete types) and well behaves on closed subsets (they
correspond to partial types). So whenever $p$ is a partial type in
$n$ free variables   and parameters from $A$ then $CB^A_n(p)$
 is the $CB$-rank  of the compact space consisting  of
all completions of $p$ in $S_n(A)$; usually the meaning of $n$ and
$A$ will be clear from the context so we will simply write
$CB(p)$. Similarly the $CB$-degree is defined. Thus the $CB$-rank
and degree are defined on all partial types and,  in particular,
they are defined  on formulas. If $T$ is small then the value of
the $CB$-rank of a partial type over a finite domain is an
ordinal.

\smallskip
$\phi(M,\bar a)$ denotes the solution set of $\phi(\bar x,\bar
a)$; if $p(\bar x)$ is a (partial) type then by $p(M)$ we denote
the set of all its realizations. $D\subseteq M^n$ is definable if
it is defined by a formula with parameters; it is $A$-definable
(or definable over $A$) if the defining formula can be chosen
to use only parameters from $A$. $D$ is type-definable
($\bigvee$-definable) if it is  the intersection (union) of $<|M|$
definable sets; if all the sets in the intersection (union) are
definable over a fixed set $A\subset M$ then $D$ is type-definable
($\bigvee$-definable)  over $A$. In this paper we will consider
only   countable intersections and unions  of sets definable over
a finite parameter set. Let $C\subseteq M^n$ be type-definable and
let $C_1\subseteq C$. $C_1$ is \emph{relatively definable within
$C$} if there is a definable $D\subseteq M$ such that $C_1=C\cap D$;
similarly relative $\bigvee$-definability is defined.

\smallskip

 Semi-isolation  was introduced by Pillay in \cite{P1};
here we will sketch its basic properties and   more details the
reader can find in \cite{BSV}.  $\bar b$ is {\em semi-isolated
over} $\bar a$ (or $\bar a$ {\em semi-isolates $\bar b$})\, iff
\,there is a formula $\phi(\bar a,x)\in \tp(\bar b/\bar a)$ such
that $\phi(\bar a,x)\vdash \tp(\bar b)$; we will denote this by
$\bar b\in \Sem(\bar a)$, or by $\bar a\longrightarrow \bar b$.
$\phi(\bar x,\bar y)$ is said to witness the semi-isolation, we
will also write $\bar a\phiar \bar b$ ($\bar a$ $\phi$-arrows
$\bar b$). Thus:
\begin{center}
$\bar a\phiar \bar b$ \ \ if and only if \ \  $\models\phi(\bar
a,\bar b)$ and $\phi(\bar a,\bar y)\vdash \tp_{\bar y}(\bar b)$.
\end{center}
If $\bar a\longrightarrow \bar b$ then  there are many formulas
witnessing the semi-isolation: if $\phi(\bar x,\bar y)$ is a
witness    then $\phi(\bar x,\bar y)\wedge \bar x=\bar x$ is a
witness, too. Therefore we can have many distinct named arrows
between a fixed pair of tuples.

The reader may note that our definition of  $\bar a\longrightarrow
\bar b$ does not exclude the existence of an arrow in the opposite
direction. If, in addition to $\bar a\longrightarrow \bar b$, we
know that the opposite arrow does not exist (i.e. that $a\notin\Sem(b)$) we will write $\bar
a\longmapsto \bar b$. Therefore $\bar a\longmapsto \bar b$ means
that  both $\bar a\longrightarrow \bar b$ and $\bar
a\notin\Sem(\bar b)$ hold; $\bar a\longrightarrow \bar b$ and
$\bar a\longmapsto \bar b$ may be  consistent.
$\bar a\lems\bar b$ means $\bar b\longmapsto \bar a$. Finally,
$\bar a\longleftrightarrow \bar b$ means that both $\bar
a\longrightarrow \bar b$ and $\bar b\longrightarrow \bar a$ hold.

\smallskip  Consider semi-isolation as a binary relation on $M^{<\omega}$.
It is trivially reflexive
and  it is not hard to see that it is transitive:  \begin{center}
 $\bar
a\phiar\bar b$  \  and  \ $\bar b\psiar\bar c$ together  \ \ imply
\ \ $\bar a\piar\bar c$;\end{center}  where $\varphi(\bar x,\bar
z)$ is $\exists\bar y (\phi(\bar x,\bar y)\wedge \psi(\bar y,\bar
z))$. Thus semi-isolation is a quasi-order on $M^{<\omega}$. We note an interesting  consequence of transitivity:
$$\bar a\longmapsto \bar b\longrightarrow \bar c \textmd{ \ \ implies \ \ } \bar a\longmapsto \bar c \ .$$
We shall be  interested mainly in semi-isolation as a binary relation on the locus of a complete type $p\in S(\emptyset)$. Then it is relatively $\bigvee$-definable
within the locus:  to simplify notation we will consider
only 1-types, this is justified by passing to an appropriate sort
in $M^{eq}$. So fix for a while $p\in S_1(\emptyset)$. Define
$$SI_p=\{(a,b)\in p(M)^2\,|\,
a\longrightarrow b\} $$ For any $(a,b)\in SI_p$ there exists an $L$-formula
$\phi(x,y)$ witnessing $p$-semi-isolation. This implies that $SI_p$ is defined by
$\bigvee\phi(x,y)$ within $p(M)^2$ (here the disjunction is taken over all such
$\phi$'s), so $SI_p$ is a relatively
$\bigvee$-definable subset of $p(M)^2$.

 Define:
$$\overline{SI}_p=\{(a,b)\in p(M)^2\,|\,
a\longrightarrow b \textmd{ or } b\longrightarrow a \textmd{ holds
}\}\,; \ \ \ \ \ \perp_p=p(M)^2\smallsetminus \overline{SI}_p
\,.$$ $(a,b)\in\perp_p$ means that $a,b$ are incomparable in the
quasi-order, in which case we will write $a\perp_p b$.
$\overline{SI}_p$ is  relatively $\bigvee$-definable within
$p(M)^2$, while $\perp_p$ is type-definable.

\smallskip We  shall use the following syntax: $x\notin\Sem_p(y)$  will denote the type consisting
of all  negated  formulas witnessing $p$-semi-isolation;
$x\perp^p y$  will denote the type $x\notin \Sem_p(y)\cup y\notin
\Sem_p(x)$. Therefore the type $p(x)\cup p(y) \cup x\perp^p y$
defines the set $\{(a,b)\in p(M)^2\,|\,a\perp_p b\}$ whose
complement in $p(M)^2$ is $\overline{SI}_p$.

\smallskip
Each   $\phi(x,y)$ witnessing $p$-semi-isolation  defines a binary relation on $p(M)$, so the quasi-order $SI_p$ may
also be viewed as the union of a family of binary relations; this has already been suggested by   the
arrows-notation. The relations defined by arrows correspond naturally to subsets of $S_{p,p}$ and relative definability properties translate into topological properties of these subsets.

\begin{dfn}\label{DS}For a non-isolated    $p\in S(\emptyset)$ and $\sigma\in\{\mapsto,\lems,
\rightarrow,\leftarrow,\leftrightarrow,\perp\}$ define:
\begin{center}
$S^p_{\sigma}=\{\tp(ab)\in S_{p,p}\,|\,a\ \sigma\ b\}$
\end{center}
\end{dfn}

The non-isolation of $p$ in the definition is assumed in order to exclude the trivial case  $SI_p=p(M)^2$, which is not interesting at all.

\begin{rmk}\label{RS}Let $p\in S(\emptyset)$ be non-isolated. We list some observations related to the defined parts of $S_{p,p}$:

\smallskip
(1) \  $S_{p,p}$ is the disjoint union: \
$S_{p,p}=S^p_{\mapsto}\cup S^p_{\lems}\cup S^p_{\perp}\cup
S^p_{\leftrightarrow}$ .

\smallskip
(2) \  The mapping taking $\tp(a,b)$ to $\tp(b,a)$ is a homeomorphism of $S_{p,p}$. It fixes setwise
$S^p_{\perp}$ and $S^p_{\leftrightarrow}$, and maps: $S^p_{\mapsto}$ onto $S^p_{\lems}$,  and $S^p_{\rightarrow}$ onto $S^p_{\leftarrow}$. In particular, $S^p_{\mapsto}$ and $S^p_{\lems}$, as well as $S^p_{\rightarrow}$ and $S^p_{\leftarrow}$ are homeomorphic.

\smallskip

(3) \   $S^p_{\leftrightarrow}$   has at least one member
(containing $x=y$). Each of  $S^p_{\mapsto}, S^p_{\lems}$ and $S^p_{\perp}$
may be empty while their union is non-empty. By part (2) $S^p_{\mapsto}$ and $S^p_{\lems}$ are homeomorphic, so  they are either both empty or both non-empty.
\begin{itemize}
\item Consider  the theory of an infinite set with infinitely many elements
named and  let $p\in S_1(\emptyset)$ be the unique non-algebraic
type. Then  $S^p_{\mapsto}=S^p_{\lems}=\emptyset$,
while $S^p_{\perp}$ is a singleton with a member containing $x\ne
y$.

\item Consider  the type $p\in
S_1(\emptyset)$ containing $\{n<x\,|\,n\in\omega\}$ in
Ehrenfeucht's theory $T_E$. There $S^p_{\mapsto}$ and
$S^p_{\lems}$ have members containing $x<y$ and $y<x$ respectively,
while $S^p_{\perp}=\emptyset$ because any two elements are comparable.
\end{itemize}

\smallskip
(4) \  $ S^p_{\mapsto}\cup S^p_{\leftrightarrow}= S^p_{\rightarrow}$ \ \ and \ \  $S^p_{\lems}\cup S^p_{\leftrightarrow}= S^p_{\leftarrow}$

\smallskip
(5) \     $S^p_{\rightarrow}$,  $S^p_{\leftarrow}$ and  $S^p_{\leftrightarrow}$ are open in $S_{p,p}$:    $S^p_{\rightarrow}$ is open because   $S^p_{\rightarrow}=\bigcup_{\phi}[\phi]$ where the union is taken over all formulas $\phi(x,y)$ witnessing $p$=semi-isolation; by homeomorphism      $S^p_{\rightarrow}$ is   open, too. If $\tp(a,b)\in S^p_{\leftrightarrow}$ then there is a formula $\varphi(x,y)\in \tp(a,b)$ witnessing $a\longleftrightarrow b$ and $S^p_{\leftrightarrow}$ is the union $\bigcup_{\varphi}[\varphi]$ taken over all such $\varphi(x,y)$. $S^p_{\leftrightarrow}$  is open in $S_{p,p}$.

\smallskip
(6) \    $S^p_{\perp}$ is closed in $S_{p,p}$ because it is the set of all completions of $p(x)\cup p(y) \cup x\perp^p y$.

\smallskip
(7) \    Since $SI_p$ corresponds to $S^p_{\rightarrow}$   $SI_p$ is relatively definable within $p(M)^2$ iff $S^p_{\rightarrow}$ is clopen in $S_{p,p}$.  But $S^p_{\rightarrow}$ is always open, so  $SI_p$ is relatively definable iff $S^p_{\rightarrow}$ is closed in $S_{p,p}$.

\smallskip
(8) \   $\overline{SI_p}$ corresponds to $S^p_{\rightarrow}\cup S^p_{\leftarrow}$, which is open.  Therefore  relative definability of  $\overline{SI_p}$  within $p(M)^2$ is equivalent to either of the following  conditions:
\begin{itemize}
\item  $S^p_{\rightarrow}\cup S^p_{\leftarrow}$ is clopen in $S_{p,p}$;

\item  $S^p_{\rightarrow}\cup S^p_{\leftarrow}$ is closed in $S_{p,p}$;

\item   $S^p_{\perp}$ is clopen in $S_{p,p}$  (because it is the relative complement of $S^p_{\rightarrow}\cup S^p_{\leftarrow}$).
\end{itemize}

\smallskip
(9) \  $\cl(S^p_{\mapsto})\subseteq S^p_{\mapsto}\cup
S^p_{\perp}$ \ (where $\cl$ denotes the topological closure in $S_{p,p}$). Since $S^p_{\leftarrow}$ is open and disjoint from
 $S^p_{\mapsto}$ we have  $\cl(S^p_{\mapsto})\subseteq
S_{p,p}\smallsetminus S^p_{\leftarrow}=S^p_{\mapsto}\cup
S^p_{\perp}$. In particular, if $S^p_{\mapsto}$ is not closed
then it has an accumulation point in $S^p_{\perp}$ and
$S^p_{\perp}\neq\emptyset$.
\end{rmk}

\begin{dfn}
A non-isolated type  $p\in S(\emptyset)$   is {\em symmetric} iff
$SI_p$ is a symmetric binary relation on $p(M)$. Otherwise, $p$ is
{\em asymmetric}.
\end{dfn}

Since semi-isolation is transitive, it follows   that $P$ is asymmetric if and only if $(p(M),SI_p)$ is a proper quasi-order (with infinite strictly increasing chains).
Asymmetric types  may exist even in an  $\omega$-stable theory  so
their existence, in general, does not imply the strict order
property; examples of that kind can be found in \cite{S1,S2} and \cite{T2}.

\begin{rmk}\label{Rnew0}
It is well known that the symmetry of  semi-isolation implies the
symmetry of isolation. We will sketch the proof of this fact.

\smallskip
(1)  If  $\tp(a/b)$ is isolated and $b\in\Sem(a)$ then  $\tp(b/a)$
is isolated, too. To prove this fact  choose $\phi(x,b)\in\tp(a/b)$
witnessing the isolation  and choose  $\psi(a,y)\in\tp(b/a)$ witnessing
the semi-isolation. Then $\psi(a,y)\wedge\phi(a,y)\vdash
\tp(b/a)$: if $b'$ satisfies this formula then $\models\psi(a,b')$
implies $\tp(b')=\tp(b)$. Combining with $\models\phi(a,b')$ (and $\phi(x,b)\vdash\tp(a/b)$) we derive
$\tp(ab')=\tp(ab)$;   $\tp(b/a)$ is isolated.

\smallskip
(2) Suppose that $\tp(a/b)$ is isolated and that $\tp(b/a)$ is nonisolated. Then $b\longrightarrow a$ and, by part (1), $b\notin \Sem(a)$. This shows that the asymmetry of isolation  on  a pair of elements implies the asymmetry of semi-isolation on the same pair. In particular, if  $p\in S(\emptyset)$ and there are $a,b\models  p$ such that
$\tp(a/b)$ is isolated and  $\tp(b/a)$ is nonisolated, then $p$ is
asymmetric.

\smallskip
(3)  Suppose that $\tp(a/b)$ is isolated. By part (1) we have:
\begin{center}
  $\tp(b/a)$ is nonisolated \ iff \ $b\notin \Sem(a)$ \ iff \ $b\longmapsto a$.
\end{center}
\end{rmk}

The following example shows that the symmetry of semi-isolation
does not necessarily imply the symmetry of isolation on $p(M)$.

\begin{exm}\label{Ex1} Let  $T=\Th(\omega,<)$. Here there is  a unique non-algebraic 1-type
$p(x)$ over $\emptyset$ (the type of an infinite element). Any
infinite element has an immediate successor and a predecessor,  so
$x\pm n$ are well-defined functions and
$$SI_p=\bigcup_{n\in \omega}\{(x,y)\in p(M)^2\,|\, x-n<y\} \ ;$$
(note that $x+n\leq y$ is implied by $x<y$). $p$ is asymmetric: take $a,b$ realizing $p$ such that $a+n<b$ holds for all integers $n$; then $a\longmapsto b$.  On the other hand, isolation on $p(M)$ is symmetric because it is witnessed by a formula of the form $x=y\pm n$ for some $n$.

Note that  $SI_p$ is not relatively definable within $p(M)^2$, because the union is strictly
increasing. On the other hand, $\overline{SI}_p=p(M)^2$ is
obviously relatively definable within $p(M)^2$ so there are asymmetric types for which $\overline{SI_p}$ is   relatively definable although $SI_p$ is not relatively definable within the locus.
\end{exm}

 Recall that a nonisolated type $p\in S(\emptyset)$ is called
{\em powerful} if the model prime over a realization of $p$ is
weakly saturated (realizes all finitary types over $\emptyset$).
Benda in \cite{B} proved that powerful types exist in any
Ehrenfeucht theory: Consider all the (isomorphism types of)
countable models atomic over a finite subset and order them by
elementary embeddability. Then there is a maximal element (since
there are finitely many isomorphism types);   the maximal models
are precisely those that are weakly saturated.

\begin{rmk}\label{R111}
We note some well-known facts about powerful types. For reader's
convenience  we will  sketch  their proofs.

\smallskip
(1) Any  powerful type is asymmetric. Let  $p(x)$ be powerful and
let $a\models p$. Since $p$ is nonisolated  we can find $a'$
realizing a nonisolated extension of $p$ in $S(a)$. Further,
because $\tp(a a')$ is realized in any maximal model, there is
$b\models p$ such that $\tp(a a'/ b)$ is isolated. Note that
$\tp(a'/ab)$ is isolated. If  $\tp(b/a)$ were   isolated then, by
transitivity of isolation,  $\tp(a'b/a)$ would be isolated, too.
The later implies    isolation of  $\tp(a'/a)$; a contradiction.
Therefore $\tp(b/a)$ is nonisolated while   $\tp(a/b)$ is
isolated, so  isolation is asymmetric on $p(M)$. By Remark \ref{Rnew0}(2)  we conclude that $p$ is
asymmetric.

\smallskip
(2) Let $p$ be powerful. Then the proof of part (1) shows that for
any $a\models p$ there exists $b\models p$ such that $b\longmapsto
a$.

\smallskip
(3) Semi-isolation  is
a downwards directed quasi-order on the locus of a powerful type:
If   $a, b$ realize $p$ then, by maximality, there is $d$
realizing $p$ such that $\tp(a b/ d)$ is isolated. In particular,
$\tp(a/ d)$ and $\tp(b/d)$ are isolated, by $\phi(d, x)$ and
$\psi(d,y)$ say and we have $d\phiar a$ and $d\psiar b$. $d$ is a
lower bound for  $a$ and $b$.
\end{rmk}

 By a \emph{$p$-principal formula} we  mean an $L$-formula
$\phi(x,y)$ such that for some (any) $a$ realizing $p$:
\begin{center}
$\phi(a,x)$ isolates an extension of $p$ in $S_1(a)$ and $a\phimap
b$ holds for all $b\in \phi(a,M)$.
\end{center}
By Remark \ref{Rnew0}(3) the condition $a\phimap b$ can be
replaced by `$\tp(b/a)$ is nonisolated'.


\begin{rmk}\label{Rnew2} Suppose that $p$ is powerful.
We  strengthen the conclusion of  Remark \ref{R111}(3):  \
 for all $a,b\in p(M)$ there is
$d\in p(M)$ and $p$-principal formulas $\phi$ and $\psi$ such that
both \ $d\phimap a$ \  and \  $d\psimap b$ hold. To prove it first
choose $c_a,c_b\models p$ satisfying  $c_a\longmapsto a $ and
$c_b\longmapsto  b$ (here we use Remark \ref{R111}(2)). Then
choose $d\models p$ such that $\tp(c_ac_bab/d)$ is isolated. Then
$\tp(c_a/d)$ is isolated, by $\phi(d,x)$ say.  Further,
$d\longrightarrow c_a\longmapsto a$ implies $d\longmapsto a$ and
$d \phimap a$. Similarly, $d\psimap b$ for a suitably chosen
$\psi$.
 \end{rmk}

Recall that a theory $T$ is \emph{binary} if  every formula is
equivalent modulo $T$ to a Boolean combination of formulas with at
most two free variables. Binary theories are a special case of
\emph{$\Delta$-based} theories (\cite{SPS}).  There $\Delta$ is a fixed set of formulas (without parameters)  and
every formula without parameters  is equivalent to a Boolean combination of
formulas from $\Delta$. As noticed in \cite{SPS} this means precisely that any
complete type $p\in S(\emptyset)$ is $\Delta$-based, i.e. that $p$ is forced by the set of
formulas $\phi^{\delta}\in p$, where $\phi\in\Delta$ and
$\delta\in\{0,1\}$. In particular, a theory is binary if and only
if any complete type is forced by the union of its $2$-subtypes.

\section{Definability of semi-isolation}\label{S2}

In this section we study definability properties of semi-isolation on the locus of an asymmetric type $p\in S(\emptyset)$.
We know that $SI_p$ is $\bigvee$-definable within $p(M)^2$.  We will prove that certain  additional assumptions on the topological complexity of $S_{p,p}$ imply the strict order property (SOP). The ordering relation found will always be a subset of $SI_p$, as formalized in the next definition.

\begin{dfn}
Suppose that $p\in S(\emptyset)$ and that $(p(M),\leq)$ is a quasi-order with infinite strictly increasing chains. We will say that $\leq$ is a {\em $p$-order} if:

\smallskip(1) $\leq$ is a relatively definable subset of $p(M)^2$; and

\smallskip(2) $a\leq b$ implies $(a,b)\in SI_p$.
\end{dfn}

The next proposition shows that a $p$-order  is the restriction of a definable quasi-order to $p(M)$; the domain of such a quasi-order can be chosen to be definable and   unbounded (contains no maximal elements).

\begin{prop}\label{Lporder}
Suppose that $p\in S(\emptyset)$,   $(p(M),\leq)$ is a  $p$-order, and that $\varphi(x,y)$ relatively defines $\leq$ within $p(M)^2$. Then there exists $\theta(x)\in p$ such that the formula $\theta(x)\wedge\theta(y)\wedge\varphi(x,y)$ witnesses $p$-semi-isolation and defines an unbounded quasi-order on $\theta(M)$.
\end{prop}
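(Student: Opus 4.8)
The plan is to produce $\theta$ as a finite conjunction of formulas from $p$, each conjunct securing one of the required features by a compactness argument, the decisive one being the semi-isolation clause. Write $\preceq$ for the relation defined by $\theta(x)\wedge\theta(y)\wedge\varphi(x,y)$; since $\theta\in p$ its domain is $\theta(M)\supseteq p(M)$, and on $\theta(M)$ it agrees with $\varphi$. Because $(p(M),\leq)$ is a quasi-order and $\leq$ is defined by $\varphi$ on $p(M)^2$, we have $p(x)\vdash\varphi(x,x)$ and $p(x)\cup p(y)\cup p(z)\cup\{\varphi(x,y),\varphi(y,z)\}\vdash\varphi(x,z)$, so compactness yields $\theta_1,\theta_2\in p$ forcing reflexivity on $\theta_1(M)$ and transitivity on $\theta_2(M)$. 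Likewise, by homogeneity every $a\models p$ begins an infinite strictly increasing $\leq$-chain inside $p(M)$ (translate the given chain by an automorphism sending its bottom to $a$), so $p(x)\vdash\exists y(\varphi(x,y)\wedge\neg\varphi(y,x))$, and compactness gives $\theta_3\in p$ with the same consequence on $\theta_3(M)$.

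The heart of the matter is the semi-isolation clause: I must find $\theta_0\in p$ with $\theta_0(y)\wedge\varphi(a,y)\vdash p(y)$ for $a\models p$. Fix $a\models p$ and pass to $S_1(a)$ via the canonical homeomorphism with $S_{p,p}$. The set $[\varphi(a,y)]$ is clopen, while the set of completions of $p$ over $a$, call it $S_p(a)$, is closed and equal to $\bigcap_{\psi\in p}[\psi]$. I claim every $q\in S_p(a)\cap[\varphi(a,y)]$ lies in the interior of $S_p(a)$: if $b\models q$ then $b\models p$ and $\models\varphi(a,b)$, hence $a\leq b$, and since $\leq\subseteq SI_p$ we get $a\longrightarrow b$; a formula $\eta$ witnessing this satisfies $\eta(a,y)\vdash p(y)$, so $[\eta(a,y)]$ is a clopen neighbourhood of $q$ contained in $S_p(a)$. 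Consequently $\cl\big([\varphi(a,y)]\setminus S_p(a)\big)$ is disjoint from $S_p(a)$ (points of $S_p(a)\cap[\varphi(a,y)]$ are interior, and points outside the clopen $[\varphi(a,y)]$ have a neighbourhood missing it). Both sets are closed in the compact space $S_1(a)$, so since $S_p(a)=\bigcap_{\psi}[\psi]$, compactness gives finitely many $\psi_i\in p$ with $\big[\bigwedge_i\psi_i\big]$ disjoint from $[\varphi(a,y)]\setminus S_p(a)$; putting $\theta_0=\bigwedge_i\psi_i\in p$ yields $[\theta_0]\cap[\varphi(a,y)]\subseteq S_p(a)$, i.e. $\theta_0(y)\wedge\varphi(a,y)\vdash p(y)$. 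As $\theta_0,\varphi,p$ are over $\emptyset$ and $\tp(a)=p$, homogeneity makes this independent of $a$. This topological step, and in particular the use of $\leq\subseteq SI_p$ to force the relevant completions into the interior of $S_p(a)$, is the step I expect to be the main obstacle.

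Finally set $\theta=\theta_0\wedge\theta_1\wedge\theta_2\wedge\theta_3$. Reflexivity and transitivity of $\varphi$ on $\theta(M)$, hence the quasi-order property of $\preceq$, follow from $\theta\vdash\theta_1\wedge\theta_2$, and the semi-isolation clause from $\theta\vdash\theta_0$; the infinite strictly increasing $\leq$-chains that lie inside $p(M)\subseteq\theta(M)$ are already strictly increasing for $\preceq$, so $\preceq$ has infinite strictly increasing chains. For the \emph{no maximal element} conclusion I will use a structural consequence of $\theta_0$: there is no $\varphi$-edge from $p(M)$ into $\theta(M)\setminus p(M)$, since $d\models p$, $c\models\theta_0$ and $\models\varphi(d,c)$ force $c\models p$. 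Hence any $c\in\theta(M)$ that is $\varphi$-above some realization $d$ of $p$ satisfies $\neg\varphi(d,c)$, so sits \emph{strictly} below $d$ and has a strict upper bound $d\in p(M)\subseteq\theta(M)$; and every $a\models p$ has a strict upper bound in $p(M)$ by the chain argument. I then replace $\theta$ by $\theta\wedge\exists y\big(\theta(y)\wedge\varphi(x,y)\wedge\neg\varphi(y,x)\big)$, which is again in $p$ (realizations of $p$ have strict upper bounds in $p(M)\subseteq\theta(M)$), thereby discarding the $\preceq$-maximal elements; combined with the edge observation this removes all maximal points from the locus. The secondary point to watch here is the handling of elements of $\theta(M)\setminus p(M)$ that are $\varphi$-disconnected from $p(M)$ above, which may require iterating this replacement or a short additional argument, but the infinite-chain part of unboundedness and the semi-isolation clause are unaffected.
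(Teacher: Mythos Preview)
Your compactness arguments for reflexivity, transitivity, and the semi-isolation clause are correct and match the paper's proof in substance; your topological phrasing of the semi-isolation step is a pleasant restatement of the same compactness, and your observation that $\theta_0(y)\wedge\varphi(a,y)\vdash p(y)$ (so no $\varphi$-edge leaves $p(M)$ upward into $\theta(M)\setminus p(M)$) is exactly what the paper extracts as well.

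The gap is in the unboundedness step. First, the sentence ``any $c\in\theta(M)$ that is $\varphi$-above some realization $d$ of $p$ satisfies $\neg\varphi(d,c)$'' is self-contradictory as written; presumably you meant ``$\varphi$-below'', i.e.\ $\models\varphi(c,d)$. More seriously, your proposed fix --- replacing $\theta$ by $\theta\wedge\exists y(\theta(y)\wedge\varphi(x,y)\wedge\neg\varphi(y,x))$ and, if needed, iterating --- does not terminate in general: after one replacement every element has a strict successor in the \emph{old} $\theta(M)$, not in the new one, and nothing prevents the situation from repeating at every finite stage (imagine $\theta(M)\setminus p(M)$ carrying finite chains of every length but no infinite ones). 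Your edge observation only rescues those $c$ which sit $\varphi$-below some realization of $p$; elements of $\theta(M)\setminus p(M)$ that are $\varphi$-incomparable to $p(M)$ remain unhandled, and you explicitly flag this without resolving it.

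The paper closes this in one move: take $\theta(x)$ to be $\theta_0(x)$ conjoined with ``there is no $\leq_\varphi$-maximal element of $\theta_0(M)$ above $x$''. This lies in $p$ because (via your edge observation) everything $\varphi$-above a realization of $p$ is again in $p(M)$ and hence not maximal. The resulting $\theta(M)$ is upward $\leq_\varphi$-closed in $\theta_0(M)$, so a maximal element of $\theta(M)$ would already be maximal in $\theta_0(M)$ and sit above itself, contradicting membership in $\theta(M)$. No iteration is needed; this is the missing idea.
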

\begin{proof}
Denote by  $\tau(x,y,z)$   the formula \ $\varphi(x,x)\wedge((\varphi(x,y)\wedge\varphi(y,z)\Rightarrow\varphi(x,z))$

\noindent
The first condition from the definition of a $p$-order implies:
\setcounter{equation}{0}
\begin{equation}
p(x)\cup p(y)\cup p(z)\vdash \tau(x,y,z)
\end{equation}
The second can be expressed by:
\begin{equation}
p(x)\cup p(y)\cup\{\varphi(x,y)\}\vdash \bigvee_{i\in I}\phi_i(x,y)
\end{equation}
where the disjunction is taken over all formulae witnessing
$p$-semi-isolation. By compactness there exists a finite
$I_0\subset I$ such that (2) holds with $I_0$ in place of $I$.
Then:
\begin{equation}
p(x)\cup p(y)\cup\{\varphi(x,y)\}\vdash  \phi(x,y)\ ,
\end{equation}
where $\phi(x,y)$ is the formula $\bigvee_{i\in I_0}\phi_i(x,y)$.  Note that $\phi(x,y)$ witnesses $p$-semi-isolation. Now we apply  compactness simultaneously to (1) and (3): there exists a formula $\theta_0(x)$ such that
\begin{equation}
\theta_0(x)\wedge \theta_0(y)\wedge\theta_0(z)\vdash \tau(x,y,z) \ \ \textmd{and} \ \ \theta_0(x)\wedge \theta_0(y)\wedge\varphi(x,y)\vdash \phi(x,y)
\end{equation}
The first relation here implies that $\varphi(x,y)$ defines a quasi-order $\leq_{\varphi}$ on $\theta_0(M)$; its   restriction to $p(M)$ is $\leq$. The second implies that
$\theta_0(x)\wedge \theta_0(y)\wedge\varphi(x,y)$ witnesses $p$-semi-isolation.  Now we show that there is no $\leq_{\varphi}$-maximal element in $\theta_0(M)$ above   $a\in p(M)$.  $a\leq_{\varphi} b$ implies $b\in p(M)$ and, because $\leq$  is a $p$-order, there exists a strictly $\leq$-increasing chains above $b$. Thus $b$ is not $\leq$-maximal. But $\leq$ is a restriction of $\leq_{\varphi}$, so $b$ is not $\leq_{\varphi}$-maximal.

\smallskip
Let $\theta(x)$ be the conjunction of $\theta_0(x)$ and the formula saying that there is no $\leq_{\varphi}$-maximal element above $x$. Clearly, $\theta(x)\wedge \theta(y)\wedge\varphi(x,y)$ witnesses $p$-semi-isolation and defines the restriction of $\leq_{\varphi}$  on $\theta(M)$. To finish the proof it remains to show  that the restricted quasi-order is unbounded;  this holds because  $\theta(M)$ is  $\leq_{\varphi}$-closed upwards in $\theta_0(M)$ and  $\theta_0(M)$ is unbounded.
\end{proof}

As an immediate corollary we obtain:

\begin{cor}\label{SIdef}
If   $p(x)\in S(\emptyset)$ is asymmetric and     $SI_p$ is a
relatively definable subset of $p(M)^2$ then there is $\theta(x)\in p$ and  a definable, unbounded
quasi-order on $\theta(M)$ whose restriction to $p(M)$ is $SI_p$.
In particular, $T$ has the strict order property.
\end{cor}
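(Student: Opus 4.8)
The plan is to recognize that $SI_p$ is itself a $p$-order and then invoke Proposition \ref{Lporder}. First I would verify the two defining clauses with $\leq$ taken to be $SI_p$ and $\varphi(x,y)$ a formula relatively defining $SI_p$ within $p(M)^2$ (which exists by hypothesis): condition (1) is precisely this hypothesis, and condition (2) holds vacuously, since ``$a\leq b$ implies $(a,b)\in SI_p$'' reads as ``$(a,b)\in SI_p$ implies $(a,b)\in SI_p$'' here. It remains to check that $(p(M),SI_p)$ is a quasi-order with infinite strictly increasing chains; this is exactly the observation, recorded after the definition of symmetric/asymmetric type, that by transitivity of semi-isolation the asymmetry of $p$ is equivalent to $(p(M),SI_p)$ being a proper quasi-order with infinite strictly increasing chains. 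Thus the hypothesis that $p$ is asymmetric supplies this clause.

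With $SI_p$ established as a $p$-order relatively defined by $\varphi$, I would apply Proposition \ref{Lporder} to obtain $\theta(x)\in p$ such that $\theta(x)\wedge\theta(y)\wedge\varphi(x,y)$ witnesses $p$-semi-isolation and defines an unbounded quasi-order $\leq_{\varphi}$ on $\theta(M)$. Since $\theta\in p$ we have $p(M)\subseteq\theta(M)$, so on $p(M)$ the conjuncts $\theta(x)$ and $\theta(y)$ hold automatically and the restriction of $\leq_{\varphi}$ to $p(M)$ coincides with $\varphi$ restricted to $p(M)^2$, that is, with $SI_p$. This already gives the desired definable, unbounded quasi-order on $\theta(M)$ whose restriction to $p(M)$ is $SI_p$.

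Finally I would extract the strict order property from this definable unbounded quasi-order. Unboundedness means no element of $\theta(M)$ is $\leq_{\varphi}$-maximal, so from any $a_0\in\theta(M)$ one builds an infinite sequence with $a_i\leq_{\varphi}a_{i+1}$ and $a_{i+1}\not\leq_{\varphi}a_i$ for all $i$. Passing to the downward cones $\{x : \varphi(x,a_i)\}$, transitivity yields $\{x : \varphi(x,a_i)\}\subseteq\{x : \varphi(x,a_j)\}$ whenever $i\leq j$, while $a_j$ lies in the $a_j$-cone but, by $a_j\not\leq_{\varphi}a_i$, not in the $a_i$-cone, forcing strict inclusion when $i<j$. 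Hence $\varphi$ gives an infinite strictly increasing chain of definable sets, which is the strict order property.

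The corollary is essentially a bookkeeping application of Proposition \ref{Lporder}, so I expect no genuine obstacle. The only points needing care are confirming that asymmetry of $p$ delivers the ``infinite strictly increasing chains'' clause of the $p$-order definition, and checking that the downward-cone construction produces strict inclusions, so that one lands in the standard formulation of the strict order property.
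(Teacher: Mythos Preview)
Your proof is correct and follows exactly the paper's route: the paper also treats this as an immediate consequence of Proposition~\ref{Lporder}, applied with $SI_p$ itself as the $p$-order. One small technical point in your SOP extraction: the downward cones should be $\{x:\theta(x)\wedge\varphi(x,a_i)\}$ rather than $\{x:\varphi(x,a_i)\}$, since transitivity of $\leq_\varphi$ is only guaranteed on $\theta(M)$; with that adjustment the inclusion and strictness arguments go through verbatim.
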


This  fact is well known and can be found in
different forms in \cite{BSV,IPT,P1} and \cite{T1}. An example of an asymmetric type with relatively definable semi-isolation is the unique non-isolated 1-type in the Ehrenfeucht's example. A similar situation appears in any
almost $\aleph_0$-categorical theory: recall that $T$ is \emph{almost $\aleph_0$-categorical} (see \cite{IPT}) if \
$p_1(x_1)\cup p_2(x_2)\cup...\cup p_n(x_n)$ \ has only finitely
many completions   $r(x_1,...,x_n)\in S(\emptyset)$ for all $n$
and  all complete types $p_i(x_i)\in S(\emptyset)$. For any $p$ in such a theory $SI_p$ is relatively definable within $p(M)^2$: $S_{p,p}$ is finite, so all its the relevant parts are clopen and, by Remark \ref{RS}, $SI_p$ is relatively definable; alternatively: there are
only finitely many inequivalent formulae witnessing
$p$-semi-isolation, so their disjunction relatively defines $SI_p$ within
$p(M)^2$.

\begin{cor}\label{Cfinite}
If   $p(x)\in S(\emptyset)$ is asymmetric and     $S_{p,p}$ is finite
then there is $\theta(x)\in p$ and  a definable, unbounded
quasi-order on $\theta(M)$ whose restriction to $p(M)$ is $SI_p$.
In particular, $T$ has the strict order property.
\end{cor}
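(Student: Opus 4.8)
The plan is to derive Corollary \ref{Cfinite} as a direct consequence of Corollary \ref{SIdef}, reducing the hypothesis ``$S_{p,p}$ is finite'' to the hypothesis ``$SI_p$ is relatively definable within $p(M)^2$''. Once that reduction is in place, the conclusion is verbatim the conclusion of Corollary \ref{SIdef}, so nothing further needs to be argued. Thus the entire content of the proof is the single implication: if $S_{p,p}$ is finite then $SI_p$ is relatively definable within $p(M)^2$.

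To establish that implication I would invoke the topological dictionary set up in Remark \ref{RS}. By Remark \ref{RS}(7), $SI_p$ is relatively definable within $p(M)^2$ if and only if $S^p_{\rightarrow}$ is clopen in $S_{p,p}$; and since $S^p_{\rightarrow}$ is always open (Remark \ref{RS}(5)), it suffices to show $S^p_{\rightarrow}$ is closed. But a finite topological space (here $S_{p,p}$ is a finite Hausdorff, hence discrete, space) has every subset clopen, so $S^p_{\rightarrow}$ is automatically closed. Therefore $SI_p$ is relatively definable. This is exactly the argument sketched in the paragraph preceding the corollary for almost $\aleph_0$-categorical theories, where finiteness of $S_{p,p}$ is precisely the feature being exploited.

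I would also record the purely syntactic version of the same fact, as the excerpt suggests as an alternative: each point of $S^p_{\rightarrow}$ carries a witnessing formula $\phi(x,y)$, and $S^p_{\rightarrow}=\bigcup_{\phi}[\phi]$. When $S_{p,p}$ is finite there are only finitely many inequivalent formulas witnessing $p$-semi-isolation (modulo $p(x)\cup p(y)$), so their finite disjunction $\bigvee_{i}\phi_i(x,y)$ relatively defines $SI_p$ within $p(M)^2$. Either route yields relative definability of $SI_p$.

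There is no real obstacle here: the statement is a specialization of Corollary \ref{SIdef} obtained by strengthening the hypothesis, and the only thing to check---that finiteness of $S_{p,p}$ forces $S^p_{\rightarrow}$ to be closed---is immediate from the discreteness of a finite space. The one point worth stating carefully is that $p$ must still be assumed asymmetric (so that $(p(M),SI_p)$ is a proper quasi-order with infinite strictly increasing chains, as needed to feed Corollary \ref{SIdef}); finiteness of $S_{p,p}$ alone does not guarantee this, which is why asymmetry remains an explicit hypothesis. With asymmetry and relative definability both in hand, Corollary \ref{SIdef} delivers the formula $\theta(x)\in p$, the definable unbounded quasi-order on $\theta(M)$ restricting to $SI_p$, and hence the strict order property.
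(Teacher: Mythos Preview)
Your proposal is correct and follows exactly the approach the paper intends: the corollary is stated without its own proof, immediately after the paragraph explaining that finiteness of $S_{p,p}$ makes all the relevant parts clopen and hence $SI_p$ relatively definable, so that Corollary~\ref{SIdef} applies. Both the topological and the syntactic justifications you give are precisely what the paper sketches in that preceding paragraph.
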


\begin{exm} Let $T=(\mathbb{Q},<,c_n,d_n)$ where $(c_n)$ is an increasing and $(d_n)$ is a decreasing sequence such that both  converge to $\sqrt{2}$. $T$ is an Ehrenfeucht theory having 6 countable models. Let $p$ be the 1-type representing  "$\sqrt{2}$". Then the  locus is $p$ is convex and linearly ordered by $<$. However, $p$ is symmetric, and $SI_p$ is the identity relation. Thus  there is no $p$-order there!

\smallskip Therefore,  even the locus of a  symmetric type may be properly ordered, so the asymmetry of semi-isolation is not an exclusive reason for the presence of the strict order property. However, we believe that in this example the reason for the absence of $p$-orders lies in non-powerfulness of $p$.
\end{exm}

\begin{que}Suppose that $p$ is a powerful type in an Ehrenfeucht theory and that $p(M)$ is properly ordered (meaning that there are $a,b$ realizing $p$ such that $a<b$). Must there exist a $p$-order?
\end{que}

It is easy to realize that relative definability of $SI_p$ implies relative definability of $\overline{SI}_p$ within $p(M)^2$. The converse is, in general, not true as Example \ref{Ex1} shows:  there the asymmetric type $p\in S_1(\emptyset)$ is such that   $\overline{SI}_p$ is relatively definable within $p(M)^2$, while $SI_p$ is not so.

We will prove in Corollary \ref{P11} below that relative definability of  $\overline{SI}_p$
for asymmetric $p$ implies the existence of a $p$-order. Actually, the order found in the proof will have an additional property which will witness that semi-isolation is \emph{partially
definable} on $p(M)$. This notion was introduced in \cite{T2} and here we give an equivalent definition which relies on the notion of a $p$-order:

\begin{dfn} We say that semi-isolation is {\em partially definable on $p$} if
there is a definable quasi-order $\leq$ such that for all $a\in
p(M)$:

\smallskip (i) \ \ \  the restriction of $\leq$ to $p(M)$ is a $p$-order, and

\smallskip  (ii) \ \   $a\leqmap b\longrightarrow b'$ and $b'\in p(M)$  imply
$a\leqmap b'$ .
\end{dfn}

    Clearly,  partial  definability of semi-isolation implies that $T$ has the strict order property.

\begin{que}
Does the existence of a $p$-order imply partial definability of semi-isolation on $p$?
\end{que}

\begin{thm}\label{Tpd}  Suppose that $p\in S(\emptyset)$ is asymmetric and that $S^p_{\mapsto}$ is closed in $S_{p,p}$. Then semi-isolation is partially definable on $p(M)$. In particular, $T$ has the strict order property.
\end{thm}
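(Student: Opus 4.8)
The plan is to exploit the fact that $S^p_{\mapsto}$ is closed and non-empty (non-empty by asymmetry, via Remark \ref{R111}(1)) to extract, by a compactness argument, a single formula $\varphi(x,y)$ that relatively defines a $p$-order and, moreover, satisfies the transitivity-across-arrows condition (ii) from the definition of partial definability. Since $S^p_{\mapsto}$ is closed in the compact space $S_{p,p}$, it is itself compact; and since each $S^p_{\rightarrow}=\bigcup_{\phi}[\phi]$ is open (Remark \ref{RS}(5)), while $S^p_{\mapsto}\subseteq S^p_{\rightarrow}$, compactness will let me cover $S^p_{\mapsto}$ by finitely many basic open sets $[\phi_i]$ and take $\phi=\bigvee_{i}\phi_i$. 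The candidate order will be defined by requiring $a\longrightarrow b$ witnessed by this $\phi$ together with the exclusion of the reverse arrow; the point of working with $S^p_{\mapsto}$ rather than $S^p_{\rightarrow}$ is that it is the \emph{asymmetric} part, which is what carries the strict order.

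\smallskip
The first concrete step is to produce, for each $\tp(a,b)\in S^p_{\mapsto}$, a formula that witnesses $a\longmapsto b$ and simultaneously \emph{forbids} the reverse semi-isolation. Here I would use that $S^p_{\mapsto}=S^p_{\rightarrow}\setminus S^p_{\leftarrow}$ (from Remark \ref{RS}(4), since $S^p_{\mapsto}=S^p_{\rightarrow}\setminus S^p_{\leftrightarrow}$ and on $S^p_{\mapsto}$ we are off $S^p_{\leftarrow}$ too). Because $S^p_{\leftarrow}$ is open, its complement is closed, so $S^p_{\mapsto}$ sits inside a closed set disjoint from $S^p_{\leftarrow}$; combined with $S^p_{\mapsto}$ being closed, I can separate it from $S^p_{\leftarrow}$ by a clopen-in-$S^p_{\rightarrow}$ neighborhood. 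Concretely: each point of $S^p_{\mapsto}$ has a basic neighborhood $[\phi_i]\subseteq S^p_{\rightarrow}$ (so $\phi_i$ witnesses semi-isolation), and by compactness finitely many cover $S^p_{\mapsto}$; set $\phi=\bigvee_i\phi_i$, which witnesses $p$-semi-isolation. I then let $\varphi(x,y)$ be $\phi(x,y)\wedge\neg\psi(y,x)$ for an appropriate $\psi$ chosen so that $\neg\psi(y,x)$ cuts out the reverse arrows; the closedness of $S^p_{\mapsto}$ away from $S^p_{\leftarrow}$ is exactly what makes such a finite $\psi$ available, again by compactness applied to the cover of the compact set $S^p_{\mapsto}$ by complements of basic sets meeting $S^p_{\leftarrow}$.

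\smallskip
Having fixed $\varphi$, I next verify it relatively defines a quasi-order whose strict part is non-empty and gives condition (ii). Transitivity of the underlying semi-isolation (established in the Preliminaries) together with the implication $\bar a\longmapsto\bar b\longrightarrow\bar c\Rightarrow\bar a\longmapsto\bar c$ noted there is the engine: it yields that the relation defined by $\varphi$ is absorbing under composition with arrows on the right, which is precisely clause (ii). I then invoke Proposition \ref{Lporder} to pass from the relatively definable sub-order to a genuinely definable unbounded quasi-order on some $\theta(M)$ with $\theta\in p$, obtaining the $p$-order and hence, by the remark following the definition of partial definability, the strict order property. The main obstacle I anticipate is the second step: ensuring that the finite disjunction $\phi$ does not accidentally introduce reverse arrows (i.e., that after throwing in the $\phi_i$'s the set defined by $\varphi$ still avoids $S^p_{\leftarrow}$ and defines a genuinely asymmetric, hence proper, order). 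This is where the \emph{closedness} hypothesis on $S^p_{\mapsto}$ is essential and cannot be dispensed with — it is what converts the infinitary $\bigvee$-definition of $SI_p$ into a single separating formula, and getting the two compactness arguments (for the forward witness and for the backward exclusion) to cohere on the same finite data is the delicate point.
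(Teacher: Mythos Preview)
Your first step---extracting by compactness a single formula $\varphi$ witnessing $p$-semi-isolation with $S^p_{\mapsto}\subseteq[\varphi]\subseteq S^p_{\rightarrow}$---is exactly right and matches the paper. The gap is in what you do next.

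The formula $\psi$ you want (with $\neg\psi(y,x)$ ``cutting out the reverse arrows'') need not exist. For $\neg\psi(b,a)$ to force $\tp(a,b)\notin S^p_{\leftarrow}$ you would need $[\psi(y,x)]\supseteq S^p_{\leftarrow}$ while $[\psi(y,x)]\cap S^p_{\mapsto}=\emptyset$; that is, a clopen set separating the closed $S^p_{\mapsto}$ from the \emph{open} $S^p_{\leftarrow}$. But nothing prevents $\cl(S^p_{\leftrightarrow})$ from meeting $S^p_{\mapsto}$, so such a clopen separator may fail to exist. And with any weaker $\psi$, transitivity of your $\varphi=\phi\wedge\neg\psi(y,x)$ breaks: from $\varphi(a,b)\wedge\varphi(b,c)$ you only get $a\longrightarrow c$, and $\tp(a,c)$ may land in $S^p_{\leftrightarrow}$, outside $[\phi]$. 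So you never produce a quasi-order, and Proposition~\ref{Lporder} cannot be invoked.

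The paper avoids both problems by a different definition of the order. With $\varphi$ the finite disjunction (no $\psi$ at all), it sets
\[
x\leq y \ \ \Longleftrightarrow \ \ x=y\ \vee\ \bigl(\varphi(x,y)\wedge(\forall t)(\varphi(y,t)\Rightarrow\varphi(x,t))\bigr).
\]
This is automatically a definable quasi-order on all of $M$ (reflexive and transitive by the cone-inclusion clause). The substantive claim is that $a\longmapsto b$ implies $a<b$: if $d\in\varphi(b,M)$ then $a\longmapsto b\longrightarrow d$ gives $a\longmapsto d$, so $\tp(a,d)\in S^p_{\mapsto}\subseteq[\varphi]$ and $d\in\varphi(a,M)$; thus $\varphi(b,M)\subseteq\varphi(a,M)$, and together with $\models\varphi(a,b)$ this yields $a\leq b$ (strictly, since $b\not\leq a$). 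This same absorption argument gives condition (ii) directly. You correctly identified the engine ($a\longmapsto b\longrightarrow c\Rightarrow a\longmapsto c$); what is missing is packaging it via cone-inclusion so that transitivity comes for free.
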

\begin{proof}Suppose that $S^p_{\mapsto}$ is closed in $S_{p,p}$. Then it is compact.
For each $q(x,y)\in S^p_{\mapsto}$ choose a formula $\varphi_q(x,y)\in q(x,y)$ witnessing $p$-semi-isolation. Then $S^p_{\mapsto}\subseteq \bigcup\{[\varphi_q]\,|\,q\in S^p_{\mapsto}\}$. Since $S^p_{\mapsto}$ is compact there is a finite subcover; let $\varphi(x,y)$ be the  disjunction of all the $\varphi_q$'s from the subcover. Note that $\varphi$ witnesses $p$-semi-isolation and that  $S^p_{\mapsto}\subseteq [\varphi]\subseteq S^p_{\rightarrow}$ holds. Define $x\leq y$ to be:
$$   x=y\vee\left(\varphi(x,y)\wedge(\forall t)(\varphi(y,t)\Rightarrow\varphi(x,t))\right)$$
Clearly, $\leq$ defines a quasi-order on $M$;  $[\varphi]\subseteq S^p_{\rightarrow}$ implies that $\leq$ witnesses $p$-semi-isolation.

\setcounter{clm}{0}
\begin{clm}
If $a\longmapsto b$ realize $p$ then $\varphi(b,M)\subsetneq \varphi(a,M)$ and $a<b$.
\end{clm}
\begin{littleproof}  Suppose that $d\in \varphi(b,M)$. Then $a\longmapsto b\longrightarrow d$ implies $a\longmapsto d$ and $\tp(ad)\in S^p_{\mapsto}\subseteq [\varphi]$. Thus $d\in\varphi(a,M)$  and $\varphi(b,M)\subsetneq \varphi(a,M)$ holds. Similarly  $a\longmapsto b$ implies $\tp(ad)\in S^p_{\mapsto}\subseteq [\varphi]$ so $\models\varphi(a,b)$. Finally, $\models\varphi(a,b)$ and $\varphi(b,M)\subsetneq \varphi(a,M)$ imply $a<b$.
\end{littleproof}

\medskip
Since $p$ is asymmetric no element of $p$ is maximal in the semi-isolation quasi-order. Then, by the claim,  no realization of $p$ is $\leq$-maximal. We conclude that $\leq$ defines a $p$-order on $p(M)$, proving condition (i) from the definition of partial semi-isolation. To prove (ii), suppose that
$a\leqmap b\longrightarrow c$ holds.  Then $a\longmapsto c$ and the claim implies
$a< c$. Therefore    $a\leqmap c$ holds,   proving (ii). $\leq$ partially defines semi-isolation on $p$.
\end{proof}

\begin{cor}\label{P11}
Suppose that   $p(x)\in S(\emptyset)$ is asymmetric and   that
$\overline{SI}_p$ is a relatively definable subset of $p(M)^2$.
Then semi-isolation is partially definable on $p(M)$. In
particular, $T$ has $SOP$.
\end{cor}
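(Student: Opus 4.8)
The plan is to deduce this corollary directly from Theorem \ref{Tpd}, whose hypotheses are that $p$ is asymmetric and that $S^p_{\mapsto}$ is closed in $S_{p,p}$. Since asymmetry of $p$ is already assumed, the entire task reduces to showing that relative definability of $\overline{SI}_p$ forces $S^p_{\mapsto}$ to be closed; the conclusion (partial definability of semi-isolation, hence SOP) then follows verbatim from Theorem \ref{Tpd}.

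First I would translate the definability hypothesis into topology. By Remark \ref{RS}(8), relative definability of $\overline{SI}_p$ within $p(M)^2$ is equivalent to $S^p_{\rightarrow}\cup S^p_{\leftarrow}$ being clopen in $S_{p,p}$ (equivalently, to $S^p_{\perp}$ being clopen). In particular $S^p_{\rightarrow}\cup S^p_{\leftarrow}$ is closed and is disjoint from $S^p_{\perp}$.

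Next I would locate the closure of $S^p_{\mapsto}$. By Remark \ref{RS}(4) we have $S^p_{\mapsto}\subseteq S^p_{\rightarrow}\subseteq S^p_{\rightarrow}\cup S^p_{\leftarrow}$; since the last set is closed, $\cl(S^p_{\mapsto})\subseteq S^p_{\rightarrow}\cup S^p_{\leftarrow}$, so $\cl(S^p_{\mapsto})$ is disjoint from $S^p_{\perp}$. On the other hand, Remark \ref{RS}(9) supplies the unconditional inclusion $\cl(S^p_{\mapsto})\subseteq S^p_{\mapsto}\cup S^p_{\perp}$. Intersecting these two facts eliminates the $S^p_{\perp}$ summand and yields $\cl(S^p_{\mapsto})\subseteq S^p_{\mapsto}$, that is, $S^p_{\mapsto}$ is closed. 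Applying Theorem \ref{Tpd} then finishes the proof.

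I do not expect a genuine obstacle here, since the corollary is essentially a topological repackaging of Theorem \ref{Tpd}. The one point demanding care is that relative definability of $\overline{SI}_p$ does \emph{not} entail relative definability of $SI_p$ (Example \ref{Ex1} exhibits an asymmetric $p$ for which $\overline{SI}_p=p(M)^2$ is relatively definable while $SI_p$ is not). Consequently one cannot hope to prove that $S^p_{\rightarrow}$ itself is closed; the argument must instead target the asymmetric part $S^p_{\mapsto}$, exploiting crucially that $S^p_{\mapsto}$ already lies inside the closed set $S^p_{\rightarrow}\cup S^p_{\leftarrow}$ and that, by Remark \ref{RS}(9), its only candidate new limit points sit in $S^p_{\perp}$, which the definability hypothesis has now separated off.
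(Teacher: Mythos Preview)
Your proposal is correct and follows essentially the same argument as the paper: you translate the hypothesis via Remark~\ref{RS}(8) into closedness of $S^p_{\rightarrow}\cup S^p_{\leftarrow}$, combine this with Remark~\ref{RS}(9) to conclude $\cl(S^p_{\mapsto})\subseteq (S^p_{\rightarrow}\cup S^p_{\leftarrow})\cap(S^p_{\mapsto}\cup S^p_{\perp})=S^p_{\mapsto}$, and then invoke Theorem~\ref{Tpd}. Your additional remark explaining why one must target $S^p_{\mapsto}$ rather than $S^p_{\rightarrow}$ (in light of Example~\ref{Ex1}) is accurate and a nice clarification.
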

\begin{proof}Suppose that  $\overline{SI}_p$ is relatively definable within $p(M)^2$ and we will show that $S^p_{\mapsto}$ is closed in $S_{p,p}$. By Remark \ref{RS}(8) $S^p_{\rightarrow}\cup S^p_{\leftarrow}$ is closed; clearly it contains $S^p_{\mapsto}$ so $\cl(S^p_{\mapsto})\subseteq S^p_{\rightarrow}\cup S^p_{\leftarrow}$. On the other hand, by Remark \ref{RS}(9) we have   $\cl(S^p_{\mapsto})\subseteq S^p_{\mapsto}\cup S^p_{\perp}$. Therefore:
$$\cl(S^p_{\mapsto})\subseteq (S^p_{\rightarrow}\cup S^p_{\leftarrow})\cap(S^p_{\mapsto}\cup S^p_{\perp})=S^p_{\mapsto}\ .$$
Therefore $S^p_{\mapsto}$ is closed in $S_{p,p}$ and the conclusion follows  by Theorem \ref{Tpd}.
\end{proof}

\begin{cor}\label{Tacc} ($T$ is NSOP) If   $p\in S(\emptyset)$ is asymmetric then
$S^p_{\mapsto}$ (is infinite and) has an accumulation point in $S^p_{\perp}$. In particular, $p(x)\cup p(y)\cup x\bot^p y$ is  consistent.
\end{cor}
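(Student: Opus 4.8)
The plan is to obtain this as a short deduction from Theorem \ref{Tpd} and the closure computation in Remark \ref{RS}(9); the whole content is that the NSOP hypothesis, combined with asymmetry, forbids $S^p_{\mapsto}$ from being closed, after which the topology of $S_{p,p}$ delivers everything.

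First I would show that $S^p_{\mapsto}$ is \emph{not} closed in $S_{p,p}$. Suppose toward a contradiction that it were. Then $p$ is asymmetric and $S^p_{\mapsto}$ is closed, so Theorem \ref{Tpd} applies and yields that semi-isolation is partially definable on $p(M)$; in particular $T$ would have the strict order property, contradicting the standing assumption that $T$ is NSOP. Hence $S^p_{\mapsto}$ is not closed. (This also re-proves $S^p_{\mapsto}\neq\emptyset$, the empty set being closed; one sees non-emptiness directly as well, since asymmetry of $SI_p$ gives $a,b\models p$ with $a\longrightarrow b$ but $b\not\longrightarrow a$, i.e. $a\longmapsto b$, whence $\tp(a,b)\in S^p_{\mapsto}$.)

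Second, I would invoke Remark \ref{RS}(9): as $S^p_{\mapsto}$ is not closed, it has an accumulation point lying in $S^p_{\perp}$, and consequently $S^p_{\perp}\neq\emptyset$. This is the main assertion, and it simultaneously gives the parenthetical claim that $S^p_{\mapsto}$ is infinite, because $S_{p,p}$ is a subspace of the Stone space $S_m(\emptyset)$, hence Hausdorff (indeed $T_1$), and a finite subset of a $T_1$ space has no accumulation points; so a set possessing one must be infinite. Finally, by Remark \ref{RS}(6) the set $S^p_{\perp}$ is precisely the collection of completions of $p(x)\cup p(y)\cup x\perp^p y$, so $S^p_{\perp}\neq\emptyset$ is exactly the consistency of that partial type, which is the ``in particular'' clause.

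I do not expect a genuine obstacle here, as the statement merely repackages Theorem \ref{Tpd} with the closure inclusion of Remark \ref{RS}(9). The only points demanding care are the logical direction—one uses the contrapositive of Theorem \ref{Tpd}, so the NSOP hypothesis serves precisely to rule out the ``closed'' case—and the elementary topological fact that the presence of an accumulation point forces infiniteness, which is what upgrades the conclusion to the parenthetical strengthening.
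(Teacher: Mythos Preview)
Your proposal is correct and follows essentially the same route as the paper: use NSOP together with Theorem \ref{Tpd} (in contrapositive) to conclude $S^p_{\mapsto}$ is not closed, then apply Remark \ref{RS}(9) to locate an accumulation point in $S^p_{\perp}$ and deduce consistency of $p(x)\cup p(y)\cup x\perp^p y$. Your treatment is actually slightly more thorough, since you make explicit why the existence of an accumulation point forces $S^p_{\mapsto}$ to be infinite, a point the paper leaves tacit.
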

\begin{proof} By Remark \ref{RS}(9) we have   $\cl(S^p_{\mapsto})\subseteq S^p_{\mapsto}\cup S^p_{\perp}$.
The NSOP assumption combined with Theorem \ref{Tpd} implies that $S^p_{\mapsto}$ is not closed in $S_{p,p}$, so there exists   $q\in\cl(S^p_{\mapsto})\smallsetminus S^p_{\mapsto}$. Then $q$ is an accumulation point of $S^p_{\mapsto}$ and $q\in S^p_{\perp}$.
In particular, $S^p_{\perp}\neq\emptyset$  so $p(x)\cup p(y)\cup x\bot^p y$ is  consistent.
\end{proof}

Theories with few links were introduced by Benda in \cite{B}: $T$
has \emph{few links} if whenever $p(\bar x)$ and $q(\bar y)$ are
complete types then there are only finitely many complete types
$r(\bar x,\bar y)\supset p(\bar x)\cup q(\bar y)$ such that
$r(\bar c,\bar y)$ is nonisolated in $S(\bar c)$ for all $\bar c$
realizing $p(\bar x)$. Pillay in \cite{P1}  proved that any
Ehrenfeucht theory with few links has SOP. He noted that his proof
uses only the assumption when $p=q$ is a powerful type. Indeed, it
is not hard to realize  that the few links assumption implies that
$S^p_{\mapsto}$ is finite for any $p\in S(\emptyset)$: If $\bar
a,\bar b\models p$ and $\bar a\longmapsto \bar b$ then $\tp(\bar
a/\bar b)$ is nonisolated; there are only finitely many
possibilities for $\tp(\bar a/\bar b)$  so   $S^p_{\mapsto}$ is
finite. In particular, $S^p_{\mapsto}$ is closed in $S_{p,p}$  and
we have:

\begin{cor}  Any theory with few links and an asymmetric type   has the strict order property.
\end{cor}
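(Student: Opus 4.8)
The plan is to reduce everything to Theorem \ref{Tpd}. Fix an asymmetric type $p\in S(\emptyset)$ supplied by the hypothesis; since an asymmetric type is by definition non-isolated, the parts of $S_{p,p}$ from Definition \ref{DS}, and in particular $S^p_{\mapsto}$, are well defined. The entire argument then consists of showing that the few links assumption forces $S^p_{\mapsto}$ to be \emph{finite}, after which the statement is immediate: a finite subset of the Stone space $S_{p,p}$ is closed, so Theorem \ref{Tpd} applies to the asymmetric $p$ and yields the strict order property (indeed the partial definability of semi-isolation on $p$). Note that few links does \emph{not} in general make $S_{p,p}$ itself finite, since it says nothing about pairs with isolated fibre, so Corollary \ref{Cfinite} is not available and the finer Theorem \ref{Tpd} is exactly what is needed.

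To see that $S^p_{\mapsto}$ is finite I would first record the elementary implication that $\bar a\longmapsto\bar b$ forces $\tp(\bar a/\bar b)$ to be non-isolated. Indeed, if some $\chi(\bar b,\bar x)$ were to isolate $\tp(\bar a/\bar b)$, then, since $\tp(\bar a/\bar b)$ contains every formula of $\tp(\bar a)$, we would get $\chi(\bar b,\bar x)\vdash\tp(\bar a)$, i.e. $\bar b$ would semi-isolate $\bar a$ and so $\bar a\in\Sem(\bar b)$, contradicting $\bar a\notin\Sem(\bar b)$. Hence every type in $S^p_{\mapsto}$ has the form $\tp(\bar a,\bar b)$ with $\tp(\bar a/\bar b)$ non-isolated.

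Next I would feed this into the definition of few links applied to the pair $(p,p)$. That condition bounds by a finite number the complete types $r(\bar x,\bar y)$ extending $p(\bar x)\cup p(\bar y)$ whose fibre over a realization of $p$ is always non-isolated; applying it with the roles of $\bar x$ and $\bar y$ interchanged (equivalently, transporting along the homeomorphism $\tp(\bar a,\bar b)\mapsto\tp(\bar b,\bar a)$ of Remark \ref{RS}(2)) bounds the types $r$ for which $r(\bar x,\bar c)$ is non-isolated for all $\bar c\models p$. By the previous paragraph every member of $S^p_{\mapsto}$ is such a type, so $S^p_{\mapsto}$ is finite, hence closed in $S_{p,p}$, and Theorem \ref{Tpd} finishes the proof.

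The one point that needs care — though it is bookkeeping rather than a genuine obstacle — is matching the orientation of variables between the $\longmapsto$ relation (which yields non-isolation of $\tp(\bar a/\bar b)$, a condition on fixing the \emph{second} coordinate) and the fibrewise non-isolation quantified in the few links definition (which fixes the \emph{first} coordinate); the symmetry of the condition when $p=q$, or equivalently the swap homeomorphism, disposes of it. Beyond this, nothing remains to be done apart from quoting Theorem \ref{Tpd}.
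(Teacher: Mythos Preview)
Your proposal is correct and follows exactly the paper's approach: show that $\bar a\longmapsto\bar b$ forces $\tp(\bar a/\bar b)$ to be nonisolated, use few links to conclude $S^p_{\mapsto}$ is finite (hence closed), and invoke Theorem~\ref{Tpd}. If anything you are more careful than the paper about the variable-orientation bookkeeping, which the paper leaves implicit.
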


In the same article Pillay   commented at the beginning of Section
3 the few links assumption:  ".. This condition is admittedly
rather artificial,  but it enables some proofs to go through ..."
An easy  consequence of the few links assumption is that
$CB(S_{p,p})\leq 1$ holds for all  $p\in S(\emptyset)$ (simply
because $S_{p,p}$ cannot have infinitely many accumulation
points). So $CB(S_{p,p})= 1$ seems to be a more natural condition.
There are such Ehrenfeucht theories, the first  example was found
by Woodrow in \cite{W1}.

\begin{que}Is there a powerful type $p$ in an NSOP theory satisfying $CB(S_{p,p})= 1$?
\end{que}

In this article we do not give much evidence towards answering this question.


\begin{cor}  ($T$ is small, NSOP) Suppose that  $p\in S(\emptyset)$ is asymmetric (not necessarily powerful) and that $CB(S_{p,p})= 1$ holds. Then:

\smallskip (1) $|S^p_{\mapsto}|\geq \aleph_0$  and   $|S^p_{\perp}|\geq 1$.

\smallskip (2) There are infinitely many pairwise inequivalent $p$-principal formulae.
\end{cor}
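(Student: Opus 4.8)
The first claim is immediate and needs neither smallness nor $CB(S_{p,p})=1$. Since $T$ is NSOP and $p$ is asymmetric, Corollary \ref{Tacc} tells us that $S^p_{\mapsto}$ is infinite and has an accumulation point in $S^p_{\perp}$. The former gives $|S^p_{\mapsto}|\geq\aleph_0$, and the latter gives $S^p_{\perp}\neq\emptyset$, i.e. $|S^p_{\perp}|\geq 1$. So (1) is just a restatement of Corollary \ref{Tacc}; the extra hypotheses enter only in (2), and all I will use from (1) is that $S^p_{\mapsto}$ is infinite.

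For (2) the plan is to attach a $p$-principal formula to each isolated point of $S_{p,p}$ lying in $S^p_{\mapsto}$, and to exhibit infinitely many such points. First I would record that $S_{p,p}$ is a countable compact space: it is a closed subspace of $S_m(\emptyset)$, which is countable because $T$ is small. By the Cantor--Bendixson facts recalled in Section \ref{preli}, a countable compact space of $CB$-rank $1$ has finite $CB$-degree, so $S_{p,p}$ has only finitely many points of rank $1$; since the maximal rank is $1$, every other point has rank $0$, i.e. is isolated. Thus $S_{p,p}$ has only finitely many non-isolated points. As $S^p_{\mapsto}$ is infinite by (1), all but finitely many of its members are isolated points of $S_{p,p}$, giving infinitely many isolated points inside $S^p_{\mapsto}$.

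It remains to turn each such point into a $p$-principal formula, with distinct points yielding inequivalent formulae. Fix an isolated point $r$ of $S_{p,p}$ lying in $S^p_{\mapsto}$ and fix $a\models p$. Under the canonical homeomorphism of $S_{p,p}$ with $S_p(a)$, the point $r$ corresponds to a type $q=\tp(b/a)$ with $a\longmapsto b$, and $r$ being isolated means $q$ is isolated in $S_p(a)$: there is $\phi_0(a,y)$ with $p(y)\cup\{\phi_0(a,y)\}\vdash q$. This is only relative isolation, so to obtain a genuine isolating formula I would conjoin a semi-isolation witness. Since $a\longmapsto b$ gives $a\longrightarrow b$, choose $\psi(a,y)\in\tp(b/a)$ witnessing $p$-semi-isolation, so $\psi(a,y)\vdash p(y)$, and set $\phi(x,y):=\phi_0(x,y)\wedge\psi(x,y)$. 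Then $\phi(a,y)\vdash p(y)$ and $\phi(a,y)\vdash\phi_0(a,y)$, whence $\phi(a,y)\vdash q$; so $\phi(a,y)$ isolates the complete type $q$ in $S_1(a)$, and $\phi(a,M)=q(M)$. Every $b'\in\phi(a,M)$ realizes $q$, so $\tp(b'/a)=\tp(b/a)$ and $a\longmapsto b'$; hence $\phi$ is $p$-principal. If $q\neq q'$ come from distinct isolated points, then $\phi(a,M)=q(M)\neq q'(M)=\phi'(a,M)$, so the associated formulae are inequivalent modulo $T$. As there are infinitely many such points, we get infinitely many pairwise inequivalent $p$-principal formulae.

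The one genuinely delicate step is the passage from relative isolation of $q$ in $S_p(a)$ (which is all that ``$r$ is an isolated point of $S_{p,p}$'' provides) to an honest single isolating formula as demanded by the definition of a $p$-principal formula; conjoining with the semi-isolation witness $\psi$ is exactly the repair, and it is harmless because along $\phi(a,M)$ the arrow $a\longmapsto b'$ depends only on $\tp(b'/a)$. The other point to keep in mind is that the finiteness of the set of non-isolated points rests on $S_{p,p}$ being countable and compact, which is precisely where smallness is used; without the bound $CB(S_{p,p})=1$ the infinitely many points of $S^p_{\mapsto}$ could all be non-isolated, and the argument would collapse.
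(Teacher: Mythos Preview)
Your proof is correct and follows essentially the same route as the paper's: part (1) is Corollary~\ref{Tacc}, and for (2) one uses $CB(S_{p,p})=1$ together with smallness to find infinitely many isolated points of $S_{p,p}$ inside $S^p_{\mapsto}$, then reads off a $p$-principal formula from each. The paper's version is terser---it simply asserts that if $\tp(ab)\in S^p_{\mapsto}$ is isolated then $\tp(b/a)$ is isolated and contains a $p$-principal formula---while you make explicit the passage from relative isolation in $S_p(a)$ to genuine isolation in $S_1(a)$ by conjoining a semi-isolation witness, which is exactly the step the paper leaves implicit.
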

\begin{proof} (1) follows from  Corollary \ref{Tacc}. To prove (2) note that $CB(S_{p,p})= 1$ implies that there are infinitely many members of  $S^p_{\mapsto}$ isolated in  $S_{p,p}$. If  $\tp(ab)\in S^p_{\mapsto}$  is such a type then $\tp(b/a)$ is isolated and contains a $p$-principal formula.
\end{proof}






\section{Incomparability}\label{S3}

The next theorem   deals
with the case when $\overline{SI}_p$ has relatively definable
intersection with the product of two relatively definable subsets
of $p(M)$. The intended combinatorial description of this
situation is formalized in Proposition \ref{Pnis}: if we have two
large, unbounded relatively definable subsets of $p(M)$ then some
pair of their elements is incomparable.

\begin{thm}\label{P333}Suppose that $p\in S_1(\emptyset)$ is nonisolated and
that  $D_1, D_2\subset M$ are $\bar e$-definable   subsets of $M$
such that the following   conditions are satisfied:

\smallskip
 1)   $\overline{SI}_p\cap (D_1\times D_2)\neq \emptyset$ is relatively $\bar e$-definable
within $D_1\times D_2$;

\smallskip
 2)   For all $a\in D_1\cap p(M)$   there is $b\in D_{2}\cap p(M)$ such
that $a\longmapsto b$.

\smallskip
 3) For all $b\in D_2\cap p(M)$ there is $a\in D_1\cap p(M)$ such that
$b\longrightarrow a$.

\smallskip\noindent Then there is an $\bar e$-definable quasi-order on $M$ such that
no element of $D_1\cap p(M)$ is below a maximal one  of $D_1$. In
particular $T$ has the strict order property.
\end{thm}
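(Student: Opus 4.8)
The plan is to localize, to the "box" $B\subseteq S_{p,p}$ of types $\tp(a,b)$ concentrating on $D_1\times D_2$, the argument of Corollary \ref{P11}, and then to run a variant of the proof of Theorem \ref{Tpd}. By condition 1) fix an $\bar e$-formula $\psi(x,y)$ relatively defining $\overline{SI}_p\cap(D_1\times D_2)$ within $D_1\times D_2$; conjoining the defining formulas of $D_1,D_2$ we may assume $\psi(x,y)\vdash D_1(x)\wedge D_2(y)$. Since $D_1,D_2$ are $\bar e$-definable, $B$ is clopen (hence compact) in $S_{p,p}$, and within $B$ the set $\overline{SI}_p\cap B=[\psi]\cap B$ is clopen. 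First I would show that $S^p_{\mapsto}\cap B$ is closed in $S_{p,p}$. Working inside the clopen $B$ and combining the general inclusion $\cl(S^p_{\mapsto})\subseteq S^p_{\mapsto}\cup S^p_{\perp}$ of Remark \ref{RS}(9) with the fact that $\overline{SI}_p\cap B=(S^p_{\rightarrow}\cup S^p_{\leftarrow})\cap B$ is closed in $B$, one gets
$$\cl(S^p_{\mapsto}\cap B)\subseteq (S^p_{\mapsto}\cup S^p_{\perp})\cap(S^p_{\rightarrow}\cup S^p_{\leftarrow})\cap B=S^p_{\mapsto}\cap B,$$
exactly as in Corollary \ref{P11}, because $S^p_{\perp}$ is disjoint from the comparability set $S^p_{\rightarrow}\cup S^p_{\leftarrow}$.

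Next, since $S^p_{\mapsto}\cap B$ is compact, I would cover it by basic clopen sets $[\varphi_q]$ ($q\in S^p_{\mapsto}\cap B$) with each $\varphi_q$ witnessing $p$-semi-isolation, extract a finite subcover, and let $\varphi(x,y)$ be the disjunction of the chosen $\varphi_q$'s, conjoined with $D_1(x)\wedge D_2(y)$. Then $\varphi$ witnesses $p$-semi-isolation and satisfies $S^p_{\mapsto}\cap B\subseteq[\varphi]\subseteq S^p_{\rightarrow}\cap B$. The key monotonicity is then the box-analogue of the Claim in Theorem \ref{Tpd}: if $a,a'\in D_1\cap p(M)$ and $a\longmapsto a'$, then $\varphi(a',M)\subseteq\varphi(a,M)$. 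Indeed, for $d\in\varphi(a',M)$ we have $a'\longrightarrow d$ and $d\in D_2\cap p(M)$ (as $\varphi$ witnesses semi-isolation), so the transitivity consequence $a\longmapsto a'\longrightarrow d$ yields $a\longmapsto d$; hence $\tp(a,d)\in S^p_{\mapsto}\cap B\subseteq[\varphi]$ and $d\in\varphi(a,M)$.

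Finally I would define the $\bar e$-definable cone-inclusion quasi-order on $D_1$ (extended by the identity on the rest of $M$) by $x\preceq y$ iff $x=y$, or $x,y\in D_1$ and $\forall t\,(\varphi(y,t)\Rightarrow\varphi(x,t))$; this is visibly reflexive and transitive. By the previous step $a\longmapsto a'$ implies $a\preceq a'$. The non-maximality is then driven by the zigzag furnished by conditions 2) and 3): given $a\in D_1\cap p(M)$, choose by 2) some $b\in D_2\cap p(M)$ with $a\longmapsto b$, and by 3) some $a'\in D_1\cap p(M)$ with $b\longrightarrow a'$; transitivity gives $a\longmapsto a'$, so $a\preceq a'$ and $a'\in D_1\cap p(M)$ again admits such a successor. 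Iterating shows that no element of $D_1\cap p(M)$ is $\preceq$-maximal in $D_1$, and that $\preceq$ has infinite increasing chains, whence $T$ has the strict order property.

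The main obstacle I anticipate is the \emph{strictness} of these increases: the target requires that $D_1\cap p(M)$-elements sit below no $\preceq$-maximal element of $D_1$, so I must separate the cones, i.e. produce $t\in\varphi(a,M)\smallsetminus\varphi(a',M)$ along the zigzag. Since condition 3) supplies only a non-strict return arrow $b\longrightarrow a'$ (so possibly $a'\longleftrightarrow b$), the naive witness $b$ need not lie outside $\varphi(a',M)$, and $S^p_{\leftrightarrow}\cap B$ need not be contained in $[\varphi]$. Overcoming this will require either refining $\varphi$ so that $[\varphi]$ meets $S^p_{\leftrightarrow}$ in a controlled way, or exploiting the strictness $a\longmapsto a'$ (which forbids $a'\longrightarrow a$) to exhibit a separating element of $D_2$ directly; this is where I expect the proof to require the most care.
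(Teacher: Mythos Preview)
Your localization is essentially correct, with one caveat: the box $B$ and the clopen set $[\psi]$ live over the parameters $\bar e$, so you should be working in (the $p(x)\cup p(y)$-part of) $S_2(\bar e)$ rather than in $S_{p,p}(\emptyset)$; the relevant facts from Remark~\ref{RS} transfer without difficulty. With that adjustment, your derivation that $S^p_{\mapsto}\cap B$ is closed, your construction of $\varphi$, the monotonicity claim, and the zigzag $a\longmapsto b\longrightarrow a'$ are all fine and amount to the paper's first compactness step phrased topologically.

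The gap you flag at the end is real, and it is precisely where the paper's proof diverges from yours. The paper closes it by a second compactness step applied to condition~3): one extracts a single formula $\psi$ witnessing $p$-semi-isolation such that every $b\in D_2\cap p(M)$ satisfies $b\,\psiar\, c$ for some $c\in D_1\cap p(M)$, and then replaces your $\varphi$ by the \emph{composite}
\[
\varphi'(x,y,\bar e)\;=\;\exists z\bigl(D_2(z,\bar e)\wedge\varphi(x,z)\wedge\psi(z,y)\bigr),
\]
which now goes from $D_1$ back to $D_1$ and still witnesses $p$-semi-isolation. The cone-inclusion order is then taken with respect to $\varphi'(\,\cdot\,,M)\cap D_1$, and your monotonicity argument carries over verbatim. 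For strictness one runs the zigzag twice: from $a_1\in D_1\cap p(M)$ obtain $c_1\in D_1$ with $a_1\longmapsto c_1$ and $\models\varphi'(a_1,c_1,\bar e)$; from $c_1$ obtain $a_2$ with $c_1\longmapsto a_2$ and $\models\varphi'(c_1,a_2,\bar e)$. Then $c_1\in\varphi'(a_1,M)\cap D_1$, while $c_1\notin\varphi'(a_2,M)$ because $\models\varphi'(a_2,c_1,\bar e)$ would force $a_2\longrightarrow c_1$, contradicting $c_1\longmapsto a_2$. Hence $a_1<a_2$ strictly. In your $D_1\to D_2$ setup no such separating witness is available: the only element of $D_2$ the zigzag hands you is $b$, and nothing rules out $a'\longleftrightarrow b$. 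The composition through $D_2$ is exactly the ``refinement of $\varphi$'' you anticipated needing; without it the argument does not produce strict increases.
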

\begin{proof}Suppose that   $D_i$ is defined
by $D_i(x,\bar e)$ and that relative definability is witnessed by
$\theta(x,y,\bar e)$. So we have:
$$p(x)\cup p(y)\cup\{D_1(x,\bar e),D_2(y,\bar
e),\theta(x,y,\bar e)\}\vdash y\in\Sem_p(x) \ \vee  \
x\in\Sem_p(y).$$ The right side is a long disjunction so, by
compactness, there  is an $L$-formula $\phi(x,y)$ witnessing
$y\in\Sem_p(x)$   and there is an $L$-formula $\psi(x,y)$
witnessing $x\in\Sem_p(y)$ such that:
$$p(x)\cup p(y)\cup\{D_1(x,\bar e),D_2(y,\bar e),\theta(x,y,\bar e)\}
\vdash\phi(x,y)\vee\psi(y,x).$$ Hence for any pair $(a,b)\in
D_1\times D_2$ of realizations of $p$ we have
\setcounter{equation}{0}
\begin{equation}\label{ec}\textmd{either} \
 \models\neg\theta(a,b,\bar e) \textmd{ \ \   or: \     at least one of $a\phiar b$ and
$b\psiar a$ holds}\end{equation}
The first disjunction here is exclusive because $\theta(x,y,\bar e)$ relatively defines $\overline{SI}_p\cap D_1\times D_2$.
Further we express assumption 3) by:
\begin{equation}\label{ec22}p(x) \cup\{D_2(x,\bar e) \}\vdash
\bigvee_{\psi'(x,y)} \exists y(D_1(y,\bar e)\wedge \psi'(x,y))
\end{equation}  where the disjunction is taken over all
$\psi'(x,y)$ witnessing $p$-semi-isolation. By compactness for some  $\psi'(x,y)$ we have:
\begin{equation}\label{eec}\textmd{for all }
 b\in D_2\cap p(M)   \textmd{  there is $c\in D_1\cap p(M)$ such that
$b\psiprimar c$ holds.}\end{equation}
After replacing both $\psi$ and $\psi'$ by their
disjunction, we may assume   $\psi=\psi'$. Let $\varphi(x,y,\bar
e)$ be \ $\exists z(D_2(z,\bar e)\wedge
\phi(x,z)\wedge\psi(z,y))$.\ Clearly, $\varphi(a,y,\bar e)$ forces
$p(y)$ for any $a$ realizing $p$.

\setcounter{clm}{0}
\begin{clm}\label{221}  For any $a\in D_1\cap p(M)$ there is $c\in D_1$
satisfying  $a\longmapsto c$ and $\models\varphi(a,c,\bar e)$.
\end{clm}
\begin{littleproof} Let  $a\in D_1\cap p(M)$. By 2) there is $b\in D_2\cap p(M)$ and by   (\ref{eec})  there is $c\in D_1\cap p(M)$ such that  $a\longmapsto b\psiar c$ holds. Then
$(a,b)\in \overline{SI}_p$ implies $\models\theta(a,b,\bar e)$,
and $a\notin\Sem_p(b)$ implies that $b\psiar a$ does not hold. By
(1) we derive   $a\phimap b$.   Thus $a\phimap b\psiar c$ and so
$\models\varphi(a,c,\bar e)$.
\end{littleproof}

\smallskip Define  \ \ $a'\leq b'$ \  iff \   $\varphi(b',M,\bar e)\cap D_1\subseteq
\varphi(a',M,\bar e)\cap D_1$.  \ Clearly, $\leq$  is a definable
quasi-order on $M$. We will show that no   element of $D_1\cap
p(M)$ is below a maximal one of $D_1$.

\begin{clm}\label{222}  If  $a,c \in D_1 \cap p(M)$ and $a\longmapsto c$
then $a\leq c$.
\end{clm}
\begin{littleproof} Suppose that  $d\in \varphi(c,M,\bar e)\cap D_1$.
Then  there is $b\in D_2$ such that $c\phiar b\psiar d$. Now,
$a\longmapsto c\longrightarrow b$ implies $a\longmapsto b$, so
$b\psiar a$ does not hold; also, $(a,b)\in \overline{SI}_p$
implies $\models\theta(a,b,\bar e)$.  By (1) we conclude that
$a\phimap b$ holds and then $a\phimap b\psiar d$ implies
$\varphi(a,d,\bar e)$. Thus $d\in \varphi(a,M,\bar e)$. This shows
that $\varphi(c,M,\bar e)\cap D_1\subseteq \varphi(a,M,\bar e)\cap
D_1$, i.e $a\leq c$.
\end{littleproof}

\smallskip
Now, let   $a_1\in D_1\cap p(M)$.  By  Claim \ref{221} there is
$c_1\in D_1$ such that $a_1\longmapsto c_1$ and
$\models\varphi(a_1,c_1,\bar e)$.   By Claim \ref{222} we have
$a_1\leq c_1$. Repeating the same procedure with $c_1$ we find
$a_2\in D_1$ satisfying: $c_1\longmapsto a_2$,
$\models\varphi(c_1,a_2,\bar e)$ and $c_1\leq a_2$. In particular
$a_1\leq a_2$, i.e.   $\varphi(a_2,M,\bar e)\cap D_1\subseteq
\varphi(a_1,M,\bar e)\cap D_1$. Then  $c_1\notin
\varphi(a_2,M,\bar e)$: otherwise  $\models\varphi(a_2,c_1,\bar
e)$ would witness  $a_2\longrightarrow c_1$ which is in
contradiction with $c_1\longmapsto a_2$.    Thus $c_1\in
\varphi(a_1,M,\bar e)\setminus \varphi(a_2,M,\bar e)$ and
$a_1<a_2$. Continuing in this way we get an infinite strictly
increasing chain of elements of $D_1\cap p(M)$.
\end{proof}

\section{Semi-isolation on minimal powerful types}\label{S4}

Throughout this section we will assume that $T$ (is small
and) has a powerful type. We will
say that $p\in S(\emptyset)$ is  a \emph{minimal powerful} type if
it is powerful and there is a formula $\theta(x)\in p$ such that
$p$ is the unique powerful type containing $\theta$. Minimal
powerful types exist in any Ehrenfeucht theory: take a powerful
type of minimal $CB$-rank.  To simplify notation, unless otherwise stated  we will
assume that $p\in S_1(\emptyset)$ is   powerful.

\smallskip We will be interested in  sets definable over a  single
parameter, for which  we do not a priori assume to realizes even a non-isolated type.   We will say that $D=\phi(d,M)$
is a \emph{$p$-set} if $D\cap p(M)\neq\emptyset$ and there exists $b\in D\cap p(M)$   such that at least one of the following two conditions hold:
\begin{enumerate}
\item $b$ does not semi-isolate $d$;

\item $\tp(d)$ is not powerful.
\end{enumerate}The
intended intuitive description of a $p$-set  is that $D\cap p(M)$
is large and unbounded; this is formalized in Lemma \ref{Lbd}
below.

\smallskip

\begin{rmk}\label{Rpset} Suppose that $p$ is a powerful type.

\smallskip

(1) If $\tp(d)$ is not powerful then the second condition from the
definition of a $p$-set is satisfied, so  $D=\phi(d,M)$ is a
$p$-set if and only if it contains a realization of $p$.

\smallskip (2)  Suppose that $p$ is a minimal powerful type  and that
$\theta(x)\in p$ witnesses the minimality. Let $d\in\theta(M)\smallsetminus p(M)$. Then, by part (1),  $D=\phi(d,M)$ is
a $p$-set whenever it contains a realization of $p$.

\smallskip (3) Suppose that $d\models p$ and that $\phi(x,y)$
witnesses the asymmetry of  $p$-semi-isolation: there are $a,b\in
p(M)$ such that \ $a\phimap b$.\, Then $b$ witnesses that the
first condition  from the definition holds   for $D=\phi(a,M)$, so
$\phi(a,M)$ is a $p$-set. In particular $\psi(a,M)$ is a $p$-set
for any $p$-principal formula  $\psi(x,y)$  and  $a\models p$.

\smallskip
(4) Suppose that $p$ is a minimal powerful type  and that the
minimality is witnessed by $\theta(x)\in p(x)$.   If $\phi(x,y)$
is  a $p$-principal formula, then for all $d\in \theta(M)$:
$D=\phi(d,M)$ is a $p$-set if and only if it contains a
realization of $p$. For $d\in p(M)$ this follows from part (3), and for $d\notin p(M)$ from part (1).
\end{rmk}

\begin{lem}\label{Lbd} Suppose that: $\theta(x)\in p(x)$ witnesses that $p\in S_1(\emptyset)$ is
a minimal powerful type,   $d\in \theta(M)$, and that
$D=\phi(d,M)$ is a $p$-set. Then $D\cap p(M)$ does not have an
$SI_p$-upper bound.
\end{lem}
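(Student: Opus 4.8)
The plan is to argue by contradiction. Suppose $c\in p(M)$ is an $SI_p$-upper bound of $D\cap p(M)$, so that $a\longrightarrow c$ for every $a\in D\cap p(M)$; I will contradict the hypothesis that $D$ is a $p$-set. The first move is to make the bound uniform. The condition ``$a\longrightarrow c$'' on $a\models p$ is the $\bigvee$-definable condition $\bigvee_{\rho}\rho(a,c)$, the disjunction taken over all formulas $\rho(x,y)$ witnessing $p$-semi-isolation, so $p(x)\cup\{\phi(d,x)\}\vdash\bigvee_{\rho}\rho(x,c)$. By compactness finitely many disjuncts suffice; their disjunction $\chi(x,y)$ again witnesses $p$-semi-isolation, and a second application of compactness produces $\theta_1(x)\in p$ (which we may take to imply $\theta$) with $\theta_1(x)\wedge\phi(d,x)\vdash\chi(x,c)$. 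Thus every realization of $p$ in $D$ semi-isolates $c$ through the single formula $\chi$.

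Next I would prove that $\tp(d)$ is powerful, forcing $d\models p$ by minimality. Set $\beta(d,y):=\forall x\,(\theta_1(x)\wedge\phi(d,x)\rightarrow\chi(x,y))$. Then $\models\beta(d,c)$, and instantiating $x$ by the $p$-set witness $b_0\in D\cap p(M)$ (which satisfies $\theta_1$ and $\phi(d,\cdot)$) gives $\beta(d,y)\vdash\chi(b_0,y)\vdash p(y)$, because $b_0\models p$. Hence $\beta(d,y)\vdash p(y)$ and in particular $d\longrightarrow c$. Now the key sublemma: \emph{if a tuple $d$ semi-isolates some realization of a powerful type $p$, then $\tp(d)$ is powerful.} To see this, let $N_d$ be the model atomic over $d$ (it exists since $T$ is small). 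As $\models\exists y\,\beta(d,y)$ and $\beta(d,y)\vdash p(y)$, $N_d$ contains a realization $c'$ of $p$; since $p$ is powerful the prime model over $c'$ is weakly saturated and embeds elementarily over $c'$ into $N_d$, so $N_d$ is weakly saturated and $\tp(d)$ is powerful. Because $d\models\theta$ and $p$ is the unique powerful type containing $\theta$, this yields $\tp(d)=p$, i.e. $d\models p$. Consequently the second clause in the definition of a $p$-set fails, so the witness must satisfy the first: $b_0\not\longrightarrow d$.

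It remains to refute $b_0\not\longrightarrow d$, and this is the step I expect to be the main obstacle. By the previous paragraph $\beta(d,M)\subseteq p(M)$ is a nonempty $d$-definable set of upper bounds with $d\longrightarrow u$ for each $u\in\beta(d,M)$. Moreover no upper bound $u$ can satisfy $u\longrightarrow d$: otherwise $b_0\longrightarrow u\longrightarrow d$ would force $b_0\longrightarrow d$. Hence every upper bound satisfies $d\longmapsto u$, and using weak saturation of $N_d$ I can pick $u_0\in\beta(d,M)\cap N_d$ with $\tp(u_0/d)$ isolated, so that $d\longmapsto u_0$, $b_0\longrightarrow u_0$ and $b_0\not\longrightarrow d$ all hold. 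The difficulty is genuine: since $u_0\not\longrightarrow d$, the semi-isolation $b_0\longrightarrow u_0$ cannot be transported back to $b_0\longrightarrow d$, and any attempt to produce an element of $D=\phi(d,M)$ lying strictly above the fixed $c$ is circular, being exactly what the upper bound forbids. The contradiction must instead come from a structural feature, using that $\tp(d/b_0)$ is nonisolated (a consequence of $b_0\not\longrightarrow d$) together with the weak saturation of the prime model over $b_0$ and the full strength of the minimality of $p$; extracting it --- essentially showing that the $d$-definable cone of upper bounds, all lying strictly above $d$, is incompatible with the presence of the witness $b_0$ --- is the crux of the whole proof.
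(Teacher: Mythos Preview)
Your setup matches the paper almost exactly: compactness gives a single witness $\chi$ and a $\theta_1\in p$ with $\theta_1(x)\wedge\phi(d,x)\vdash\chi(x,c)$; your $\beta(u,y):=\forall x(\theta_1(x)\wedge\phi(u,x)\rightarrow\chi(x,y))$ is precisely the paper's $\sigma$; and your sublemma ``$d$ semi-isolates a realization of $p\Rightarrow\tp(d)$ is powerful'' is exactly the engine the paper uses. So everything through ``$d\models p$, hence the witness $b_0$ must satisfy $b_0\not\longrightarrow d$'' is correct.

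The genuine gap is your final paragraph. You frame the remaining task as \emph{refuting} $b_0\not\longrightarrow d$, i.e.\ somehow forcing $b_0\longrightarrow d$, and then correctly observe that nothing in sight does this. That is because this is not what one should try. The paper does not contradict $b_0\not\longrightarrow d$; it \emph{exploits} it. Non-semi-isolation means that no formula in $\tp(d/b_0)$ pins down $\tp(d)=p$. Apply this to the single formula
\[
\gamma(y):=\theta(y)\wedge\phi(y,b_0)\wedge\exists z\,\beta(y,z)\in\tp(d/b_0).
\]
Since $b_0\not\longrightarrow d$, $\gamma(y)$ is consistent with some $1$-type other than $p$, so there is $d'\models\gamma$ with $d'\notin p(M)$. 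Now rerun your own argument with $d'$ in place of $d$: because $\models\phi(d',b_0)$ and $b_0\models p\vdash\theta_1$, the instantiation $x\mapsto b_0$ gives $\beta(d',y)\vdash\chi(b_0,y)\vdash p(y)$; and $\exists z\,\beta(d',z)$ guarantees $\beta(d',M)\neq\emptyset$. Your sublemma then says $\tp(d')$ is powerful. But $d'\in\theta(M)\smallsetminus p(M)$, contradicting that $\theta$ witnesses the minimality of $p$.

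So the missing idea is not a structural analysis of the ``cone of upper bounds'' above $d$; it is the one-line observation that non-semi-isolation lets you slide $d$ off $p$ while keeping all the data $(\theta,\ \phi(\cdot,b_0),\ \exists z\,\beta(\cdot,z))$ needed to rerun the powerfulness argument. With that move inserted, your write-up becomes a complete proof, essentially identical to the paper's.
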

\begin{proof}Suppose, on the contrary, that $a\in p(M)$ is an upper bound for $D\cap p(M)$. Then
$c\longrightarrow a$ holds for all $c\in D\cap p(M)$:
\[p(x)\cup\{\phi(d,x)\}\vdash \bigvee_{\psi}\psi(x,a)\] By
compactness there are $\theta_0(x)\in p(x)$ (wlog implying
$\theta(x)$) and $\psi(x,y)$ witnessing $p$-semi-isolation such
that \ $\models (\theta_0(x)\wedge \phi(d,x))\Rightarrow
\psi(x,a)$. Define: $$\sigma(y,z):=\forall t ((\theta_0(t)\wedge
\phi(y,t))\Rightarrow \psi(t,z))$$ Then $\models\sigma(d,a)$ holds
and, according to the definition we have two cases:

\smallskip \noindent {\bf Case 1.} \ There exists  $b\in D\cap p(M)$  such
that $b$ does not semi-isolate $d$. \

\smallskip\noindent
In this case we have: \setcounter{equation}{0}
\begin{equation}\label{equ2}\models \phi(d,b)\wedge \theta(d)\wedge\exists
z\sigma(d,z);\end{equation} Since $b$ does not semi-isolate $d$
any formula from $\tp(d/b)$ is consistent with   infinitely many
types from $S_1(\emptyset)$,  so there exists $d'\in M$ which does
not realize $p$ and satisfies (\ref{equ2})   in place of $d$. Note
that $\models\theta(d')$ and the minimality of $p$ together imply
that $\tp(d')$ is not powerful. Let $a'$ be such that:
$$\models \phi(d',b)\wedge \theta(d')\wedge\sigma(d',a')$$
We {\em claim}  that  $\sigma(d',z)\vdash p(z)$ holds. Assume
$\models\sigma(d',c)$. Then from  $b\in\theta_0(M)\cap \phi(d',M)$
and  the definition of $\sigma$  we get $\models\psi(b,c)$. Since
$\psi$ witnesses $p$-semi-isolation the claim follows.

$T$ is small, so there is an isolated type in $S_1(d')$ containing
$\sigma(d',t)$, it is an extension of $p$. Thus  $d'$ isolates an
extension of $p$ and, because $p$ is powerful, $\tp(d')$ has to be
powerful, too. A contradiction.

\smallskip
\smallskip \noindent {\bf Case 2.} \ $\tp(d)$ is not powerful.

\smallskip\noindent
Since $D$ is a $p$-set there exists $b'\in \phi(d,M)\cap p(M)$.
Assuming $\models \sigma(d,c')$ and arguing as in the first case
we derive $b'\psiar c'$ so  $\sigma(d,z)\vdash p(z)$.  Again we
can find an isolated extension of $p$ in $S_1(d)$ and conclude
that $\tp(d)$ is powerful. A contradiction.
\end{proof}

Next we show that $SI_p$-incomparability appears quite often on
the locus of a minimal powerful type in  an NSOP theory.

\begin{prop}\label{Pnis} ($T$ is NSOP) Suppose that: $\theta(x)\in p(x)$ witnesses that $p$ is
a minimal powerful type,   $d_i\in \theta(M)$,   and  that each
$D_i=\phi_i(d_i,M)$ is a $p$-set for $i=1,2$. Then there are
$a\in D_1, b\in D_2$ realizing $p$   such that $a\perp_p b$.
\end{prop}
\begin{proof}
Otherwise,  for all $a\in D_1, b\in D_2$    realizing $p$ we have
$(a,b)\in \overline{SI}_p$ so: \setcounter{equation}{0}
\begin{equation}\textmd{ at least one of $a\longrightarrow b$ and
$b\longrightarrow a$ holds.}\end{equation} In particular,
$\overline{SI}_p\cap (D_1\times D_2)$ is relatively
$d_1d_2$-definable within $p(M)^2$ and the first assumption  of
Theorem \ref{P333} is satisfied. We will prove that the other two
are satisfied, too.

Suppose that  the second condition fails and witness the failure
by $a\in D_1\cap p(M)$.  Then, by (1), $b\longrightarrow a$ would
hold for all $b\in D_2\cap p(M)$, so $a$ would be an upper bound for
$D_2\cap p(M)$; this is in contradiction with Lemma \ref{Lbd}.
Therefore the second and, similarly, the third condition are
fulfilled. By Theorem \ref{P333}  $T$ has the strict order
property. A contradiction.
\end{proof}

Thus   $SI_p$ is in some sense a "wide" quasi order. Because $p$ is powerful, it is also directed downwards. It is
interesting to know whether it has to be directed upwards.

\begin{que}
Must $SI_p$ be directed upwards on the locus of a minimal powerful
type in an NSOP theory?
\end{que}

We have proved in Corollary \ref{Tacc} that  $S^p_{\perp}\neq \emptyset$ and here, under much stronger assumptions,  we will prove that $|S^p_{\perp}|\geq 2$.

\begin{prop}
Suppose that $T$ is a  binary NSOP theory with  3 countable models
and that $p\in S_1(\emptyset)$ has $CB$-rank 1. Then \
 $q(x,y)=p(x)\cup p(y)\cup   x\bot_p y $ \  has at least two completions
in $S_2(\emptyset)$.
\end{prop}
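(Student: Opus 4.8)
The goal is to show that $|S^p_{\perp}|\ge 2$, since the completions of $q$ correspond exactly to the points of $S^p_{\perp}$. First I would record the standing features of $p$: being of $CB$-rank $1$ it is a \emph{minimal} powerful type (a formula $\theta(x)\in p$ isolates $p$ among the non-isolated $1$-types), so the machinery of Section \ref{S4} — $p$-sets, Lemma \ref{Lbd}, Proposition \ref{Pnis} — is available, and $p$ is asymmetric. By Corollary \ref{Tacc} we already know $S^p_{\perp}\ne\emptyset$ and that each of its points is an accumulation point of $S^p_{\mapsto}$. I would then argue by contradiction, assuming $S^p_{\perp}=\{r\}$ is a single point. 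Since the flip $\tp(a,b)\mapsto\tp(b,a)$ is a homeomorphism of $S_{p,p}$ fixing $S^p_{\perp}$ setwise (Remark \ref{RS}(2)), the unique type $r$ must be \emph{symmetric}: $a\perp_p b$ forces $\tp(a,b)=\tp(b,a)$.

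The next step converts uniqueness of $r$ into a uniform statement about common lower bounds. Fix $d\models p$ and a $p$-principal formula $\phi$; then $\phi(d,M)$ is a $p$-set (Remark \ref{Rpset}(3)--(4)), so Proposition \ref{Pnis} applied with $D_1=D_2=\phi(d,M)$ yields $a,b\in\phi(d,M)$ realizing $p$ with $a\perp_p b$, and $d$ is a common lower bound with $\models\phi(d,a)\wedge\phi(d,b)$ and $d\longmapsto a$, $d\longmapsto b$. Consequently the formula
$$\chi_{\phi}(x,y):=\exists z\,\bigl(\phi(z,x)\wedge\phi(z,y)\bigr)$$
holds of this incomparable pair, whence $\chi_{\phi}\in r$; by uniqueness of $r$, \emph{every} $\perp_p$-pair admits a common lower bound arising from the single principal formula $\phi$. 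Here binarity enters to control triples: $\tp(d,a,b)$ is determined by $\tp(d,a)$, $\tp(d,b)$, $\tp(a,b)$, which lets me pin down exactly which principal data can sit below a given incomparable pair.

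To reach a contradiction I would exhibit an incomparable pair on which $\chi_{\phi}$ \emph{fails} — a pair $(a',b')$ with $a'\perp_p b'$ admitting no common lower bound through a single principal formula (equivalently, every common lower bound $d'$ satisfies $\tp(a'/d')\ne\tp(b'/d')$). The construction uses that $CB(S_{p,p})$ is sufficiently controlled, so that $S^p_{\mapsto}$ has isolated points and there are at least two inequivalent principal formulas $\phi_1,\phi_2$; picking $d\models p$ and applying Proposition \ref{Pnis} to the two distinct $p$-sets $\phi_1(d,M)$ and $\phi_2(d,M)$ gives $a'\in\phi_1(d,M)$ and $b'\in\phi_2(d,M)$ incomparable. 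Taking $\phi=\phi_1$, the pair produced in the previous paragraph witnesses $\chi_{\phi_1}\in r$ while $(a',b')$ would be shown to violate $\chi_{\phi_1}$, so $\tp(a,b)$ and $\tp(a',b')$ are two distinct points of $S^p_{\perp}$, contradicting $|S^p_{\perp}|=1$.

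The main obstacle is precisely this last construction: proving that $(a',b')$ genuinely violates $\chi_{\phi_1}$, rather than merely being assembled from different principal formulas. This is where all three hypotheses combine — binarity reduces the existence of a common $\phi_1$-lower bound to a constraint on the pairwise types $\tp(d',a')$ and $\tp(d',b')$; the $CB$-rank $1$ assumption forces the extensions of $p$ over a single parameter to be rigid enough that $b'\in\phi_2(d,M)$ cannot also acquire a $\phi_1$-type over a common lower bound of $a'$; and the three-countable-models hypothesis, via the homogeneity it imposes, caps how many incomparability types may coexist and underwrites the $CB$-rank bookkeeping. I expect that verifying $(a',b')$ cannot acquire a same-principal-type common lower bound will require a careful $CB$-rank and $CB$-degree analysis within $S_{p,p}$, and this is the step that accounts for the ``lots of efforts'' mentioned in the introduction.
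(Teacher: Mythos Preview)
Your proposal has a genuine gap at the decisive step. You correctly set up the contradiction ($S^p_{\perp}=\{r\}$) and correctly observe that $\chi_{\phi_1}:=\exists z\,(\phi_1(z,x)\wedge\phi_1(z,y))$ must lie in $r$. But to produce a second $\perp_p$-type you try to build $(a',b')$ with $a'\in\phi_1(d,M)$, $b'\in\phi_2(d,M)$ and argue it violates $\chi_{\phi_1}$. There is no reason this should hold: that $b'$ lies in $\phi_2(d,M)$ for \emph{this} $d$ says nothing about whether some \emph{other} $d'$ satisfies $\phi_1(d',a')\wedge\phi_1(d',b')$. You acknowledge this yourself and defer to an unspecified ``careful CB-rank and CB-degree analysis''; no such analysis is indicated, and I do not see one that would close the gap along these lines. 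Your use of the three-models hypothesis is also too vague (``homogeneity it imposes'') to do any work.

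The paper's argument proceeds quite differently and does not attempt to manufacture a second $\perp_p$-pair directly. From $a\perp_p b$ the three-models hypothesis is used to obtain $d\models p$ with \emph{mutual} isolation: both $\tp(ab/d)$ and $\tp(d/ab)$ are isolated. Binarity decomposes the isolating formula as $\phi(x,y)\wedge\psi(x,z)\wedge\sigma(y,z)$, and mutual isolation yields the two consequences
\[
\phi(d,y)\wedge\psi(d,z)\wedge\sigma(y,z)\vdash\tp(ab/d)
\qquad\text{and}\qquad
\phi(x,a)\wedge\psi(x,b)\wedge\sigma(a,b)\vdash p(x).
\]
The second, which recovers $p$ from the pair, has no analogue in your outline and is the engine of the proof. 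One now uses that $r$ accumulates $S^p_{\mapsto}$ to find a \emph{comparable} pair $(a',b')$ satisfying $\exists x\,(\theta(x)\wedge\phi(x,y)\wedge\psi(x,z)\wedge\sigma(y,z))$; the witness $d'$ cannot realize $p$ (else the first consequence forces $a'\perp_p b'$), so $d'\in\theta(M)\setminus p(M)$ and $\phi(d',M),\psi(d',M)$ are $p$-sets. Proposition~\ref{Pnis} then yields an incomparable pair $(a'',b'')$ there; uniqueness of $r$ puts $\sigma$ back in, and the second consequence forces $d'\models p$, a contradiction. The missing idea in your sketch is precisely this two-sided isolation and the approximation via $S^p_{\mapsto}$ rather than a direct search for a second $\perp_p$-type.
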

\begin{proof}In a theory with 3 countable  models  there is a unique isomorphism type of a "middle model",
i.e a  countable model prime over a realization of a nonisolated
type.  the middle model    is weakly saturated because every
finitary type is realized in some finitely generated model. Thus
any nonisolated type is powerful and, in particular, $p$ is
powerful. Let $\theta(x)\in p$ be a formula of $CB$-rank 1 and
$CB$-degree 1. Then $p$ is the unique nonisolated type containing
$\theta(x)$ and $p$ is a minimal powerful type.

 $p$ is asymmetric so, by Corollary
\ref{Tacc},  $q(x,y)$ is consistent. Now suppose that the conclusion of the proposition fails:  $q(x,y)$ has
a unique completion  $q'(x,y)\in S_2(\emptyset)$. Choose  $a\,b\models q'$, then  $a\perp_p b$ holds.  By Corollary \ref{Tacc} $q'$ is an accumulation point of $S^p_{\mapsto}$ so each of  $\tp(ab)$, $\tp(a/b)$ and $\tp(b/a)$ is nonisolated.  By the three model assumption, we know that the model prime over $ab$
is also prime over a realization $d$ of $p$ (because any two models
prime over a realization of a nonisolated type are isomorphic).
Note that both $\tp(ab/d)$ and $\tp(d/ab)$ are isolated. Hence
there is a formula $\tau(x,y,z)\in \tp(dab)$  such that $\tau(d,y,z)$ isolates $\tp_{yz}(ab/d)$ and $\tau(x,a,b)$ isolates $\tp_x(d/ab)$.
Now we use the assumption that $T$ is binary: there are formulas $\phi',\psi',\sigma$ such that \ $$\models  (\phi'(x,y)\wedge \psi'(x,z)\wedge \sigma(y,z))\leftrightarrow \tau(x,y,z)\ .$$
The assumed isolation properties of $\tau$ imply:
\setcounter{equation}{0}
\begin{equation}\label{e1260}
  \phi'(x,a)\wedge\psi'(x,b)\wedge\sigma(a,b)  \vdash
p(x);\end{equation}
\begin{equation}\label{e1270}
  \phi'(d,y)\wedge\psi'(d,z)\wedge\sigma(y,z)  \vdash
\tp(ab/d).\end{equation} Let   $\tp(a/d)$ be isolated by
$\phi(d,y)\in \tp(a/d)$ and let $\tp(b/d)$ be isolated by
$\psi(d,z)\in\tp(b/d)$. Without loss of generality assume that
they are chosen so that $\models (\phi(x,y)\Rightarrow
\phi'(x,y))\wedge (\psi(x,y)\Rightarrow \psi'(x,y))$.
Then by (\ref{e1260}) and (\ref{e1270}):
\begin{equation}\label{e126}
  \phi(x,a)\wedge\psi(x,b)\wedge\sigma(a,b)  \vdash
p(x);\end{equation}
\begin{equation}\label{e127}
  \phi(d,y)\wedge\psi(d,z)\wedge\sigma(y,z)  \vdash
\tp(ab/d).\end{equation} Now consider the formula  \ $(\exists
x)(\theta(x)\wedge \phi(x,y)\wedge\psi(x,z)\wedge\sigma(y,z))$  \
which is in $\tp_{yz}(ab)=q'(y,z)$. Since $S^p_{\perp}=\{q'\}$, by
Corollary \ref{Tacc},  $q'$ is an accumulation point of
$S^p_{\mapsto}$, so there are $a'b'$ satisfying this formula such
that $\tp(a'b')\in S_{\mapsto}$; hence $(a',b')\in SI_p$. Then for
some $d'$ we have:
\begin{equation}\label{e128}
 \models \theta(d')\wedge\phi(d',a')\wedge\psi(d',b')\wedge\sigma(a',b') \, .   \end{equation}
$d'$ does not realize $p$: otherwise (\ref{e127}) would imply
$a'b'\models q'$ which is in contradiction with $(a',b')\in SI_p$.
\  Thus  $d'\in\theta(M)\smallsetminus p(M)$ so, by Remark
\ref{Rpset}(2), $D_1=\phi(d',M)$ and $D_2=\psi(d',M)$ are
$p$-sets. By Proposition \ref{Pnis} there are $a''\in D_1$ and
$b''\in D_2$ realizing $p$ such that $a''\perp_p b''$ holds. The
uniqueness of $q'$ implies $a''b''\models q'$ and  $\models
\sigma(a'',b'')$. Thus
$$\models    \phi(d',a'')\wedge\psi(d',b'')\wedge\sigma(a'',b'')$$
By (\ref{e126}) and  $\tp(ab)=\tp(a''b'')=q'$ we get $d'\models p$. A contradiction.
\end{proof}

\section{PGPIP for binary theories}\label{S5}

Throughout this section    we will assume that $T$ is a small,
binary theory and that $p$ is  a powerful 1-type. We have already
noted in Remark \ref{R111}  that $SI_p$ is directed downwards. In
Remark \ref{Rnew2} we noted a stronger form:  for any pair of
elements $a,b\in p(M)$ there exists $d\in p(M)$ and $p$-principal
formulas $\phi,\psi$ such that both $d\phiar a$ and $d\psiar b$
hold. In all  the basic examples $\phi$ and $\psi$ can be chosen
from a finite (fixed in advance) set. This property is labelled in
\cite{S2} as the global pairwise intersection property for $p$
(GPIP). Precisely, it means that there is a formula $\phi(x,y)$
which is a disjunction of $p$-principal formulae and such that
$(p(M),\phi(M^2))$ is an acyclic digraph satisfying:
\setcounter{equation}{0}\begin{equation} \textmd{for all $a,b\in
p(M)$ there exists $d\models p$ such that $\models
\phi(d,a)\wedge\phi(d,b)$.}\end{equation} Here we introduce  a bit
stronger property.

\begin{dfn} $p$ has PGPIP if there is a formula $\phi(x,y)$ which
is a disjunction of $p$-principal formulae and is  such that:
$(p(M)^2,\phi(M))$ is an acyclic digraph and for all $a,b\in p(M)$
there exists $d\models p$ satisfying:
\begin{equation} \textmd{ $\tp(ab/d)$ is
isolated and $\models \phi(d,a)\wedge\phi(d,b)$.}\end{equation}
\end{dfn}

We leave to the reader to check that nonisolated 1-types from the
Ehrenfeucht's and  Peretyatkin's (see \cite{Per}) examples have
PGPIP.

\begin{thm}($T$ is binary, NSOP)  Suppose that
$\phi(x,y)=\bigvee_{i=1}^n\phi_i(x,y)$, where each $\phi_i(x,y)$ is
$p$-principal, witnesses PGPIP for $p$. Then $n\geq 2$ and
 $CB(S_{p,p}(\emptyset))<  n^2$.
\end{thm}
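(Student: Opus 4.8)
The plan is to transfer the problem to the three-variable space $Y$ of all extensions of $p(x)\cup p(y)\cup p(z)$ and to read off from PGPIP a covering of $S_{p,p}$ by at most $n^2$ closed sets in which every point is isolated. First I would record two consequences of the hypotheses. Since each $\phi_i(x,y)$ is $p$-principal, for $d\models p$ the formula $\phi_i(d,x)$ isolates a single complete type over $d$; hence $[\phi_i]\cap S_{p,p}$ is a single point $r_i$, which (being cut out by the open set $[\phi_i]$) is isolated in $S_{p,p}$. For indices $i,j\in\{1,\dots,n\}$ put $V_{ij}=[\phi_i(x,y)\wedge\phi_j(x,z)]\cap Y$, a clopen subset of the compact, second countable space $Y$. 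On $V_{ij}$ the two-types $\tp(xy)=r_i$ and $\tp(xz)=r_j$ are constant, so by binarity (every complete type is forced by its $2$-subtypes, as recalled in the Preliminaries) the three-type is determined by $\tp(yz)$. Thus the projection $\pi\colon Y\to S_{p,p}$ forgetting the first variable restricts to a continuous injection on $V_{ij}$, hence to a homeomorphism onto the closed set $C_{ij}:=\pi(V_{ij})$.

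Next I would use PGPIP. Via the canonical homeomorphism $Y\cong S_{p,p}(d)$ sending $\tp(dab)$ to $\tp(ab/d)$, the condition ``$\tp(ab/d)$ is isolated'' means precisely that $\tp(dab)$ is an isolated point of $Y$. So for each $r=\tp(ab)\in S_{p,p}$, PGPIP supplies $d\models p$ with $\models\phi_i(d,a)\wedge\phi_j(d,b)$ for some $i,j$ and with $\tp(dab)$ isolated in $Y$; that is, an isolated point $\hat r\in V_{ij}$ with $\pi(\hat r)=r$. Since $V_{ij}$ is clopen, $\hat r$ is isolated in $V_{ij}$, and the homeomorphism $\pi|_{V_{ij}}$ shows $r$ is isolated in $C_{ij}$. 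This yields the two structural facts I need: $S_{p,p}=\bigcup_{i,j}C_{ij}$ is a union of at most $n^2$ closed sets, and every point of $S_{p,p}$ is isolated in at least one $C_{ij}$ containing it.

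The hard part will be the purely topological lemma that closes the argument: if a second countable Hausdorff space $X$ is covered by $m$ closed sets $C_1,\dots,C_m$ and every point of $X$ is isolated in some $C_k$ containing it, then $CB(X)<m$. I would prove this by induction on $m$, the case $m=1$ being that $X$ is discrete. For the step, fix $x_0\in X$ and choose $k_0$ with $x_0$ isolated in $C_{k_0}$, so there is an open $U\ni x_0$ with $U\cap C_{k_0}=\{x_0\}$; then $U^\ast:=U\smallsetminus\{x_0\}$ is open and contained in $\bigcup_{k\ne k_0}C_k$. Using that $CB$-rank is local, i.e. preserved under passage to open subspaces, one gets $CB_X(x_0)=CB_U(x_0)\le CB(U^\ast)+1$, while $U^\ast$ is covered by the $m-1$ closed-in-$U^\ast$ sets $C_k\cap U^\ast$ $(k\ne k_0)$, a cover again satisfying the isolation hypothesis; the induction gives $CB(U^\ast)\le m-2$ and hence $CB_X(x_0)\le m-1$. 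Taking the supremum over $x_0$ yields $CB(X)<m$. Applying this with the cover $\{C_{ij}\}$ gives $CB(S_{p,p}(\emptyset))<n^2$.

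Finally, $n\ge 2$ follows from the same picture: if $n=1$ then $S_{p,p}=C_{11}$ with every point isolated, so $S_{p,p}$ is discrete and, being compact, finite. Since $p$ is powerful it is asymmetric by Remark \ref{R111}(1), so Corollary \ref{Cfinite} would give that $T$ has the strict order property, contradicting NSOP. The most delicate points to get right are the subspace-versus-ambient comparisons of $CB$-rank in the topological lemma and the verification that the isolation hypothesis is inherited by the punctured neighbourhood $U^\ast$.
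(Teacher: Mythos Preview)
Your proof is correct and follows essentially the same route as the paper: both use binarity together with $p$-principality of the $\phi_i$ to produce an $n^2$-piece covering of $S_{p,p}$ on which the relevant types are isolated, and then apply a CB-rank induction. The only cosmetic differences are that the paper fixes $d\models p$ and covers by the discrete sets $S_{(i,j)}$ (the isolated points of your $C_{ij}$) rather than working in the three-variable space $Y$, and states the topological lemma as ``a compact space that is a union of $m$ discrete subsets has CB-rank $<m$''; note that the closedness of your $C_{ij}$ is never actually used in your induction, so the two lemmas coincide.
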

\begin{proof}Fix $d$ realizing $p$. For each pair $i,j\leq n$
define:
$$D_{(i,j)}=\{(a,b)\in p(M)^2\,|\,\tp(ab/d) \textmd{ is isolated
and } \models \phi_i(d,a)\wedge\phi_j(d,b)\}$$
$$C_{(i,j)}=\{\tp(ab/d)\,|\, (a,b)\in D_{(i,j)}\} \ \ \
S_{(i,j)}=\{\tp(ab)\,|\, (a,b)\in D_{(i,j)}\} $$ Note that PGPIP
implies that\ $\bigcup_{(i,j)}S_{(i,j)}=S_{p,p}(\emptyset)$ \
holds; in particular, if $n=1$ then
$S_{(1,1)}=S_{p,p}(\emptyset)$.

\setcounter{clm}{0}\begin{clm} For every $q(x,y)\in S_{(i,j)}$
there is $\theta_q(x,y)\in q$   which has a unique extension in
$C_{(i,j)}$.
\end{clm}
\begin{littleproof} Let $(a,b)\in D_{(i,j)}$ realize $q$. Then $\tp(ab/d)$ is isolated and,
because  $T$ is binary and $\phi_i$'s are $p$-principal, there is
a formula $\theta_q(x,y)\in q(x,y)$ such that:
$$\phi_i(d,x)\wedge\phi_j(d,y)\wedge\theta_q(x,y)\vdash \tp(ab/d)$$
Since any extension of $\theta_q(x,y)$ in $C_{(i,j)}$ contains the
formula on the left hand side, we conclude that the extension is
unique.
\end{littleproof}

Now, we claim that each $S_{(i,j)}$ is a discrete subset of
$S_{p,p}(\emptyset)$.  Suppose, on the contrary, that $q(x,y)\in
S_{(i,j)}$ is an  accumulation point of $S_{(i,j)}$.  Then
$\theta_q$ is contained in  some $q'\in S_{(i,j)}$ which is
distinct from $q$. Thus $\theta_q$ has at least two extensions in
$C_{(i,j)}$: the one extending $q$ and the one extending $q'$. A
contradiction.

\smallskip
The first part of our theorem follows: if $n=1$ then
$S_{(1,1)}=S_{p,p}(\emptyset)$ is discrete and, because it is
compact,   it has to be finite.  Then  by Corollary \ref{Cfinite},
$T$ has SOP. A contradiction. Therefore $n\geq 2$.

\smallskip The second part follows from the following topological fact: A
compact space which is a union of $m$ discrete subsets has
$CB$-rank smaller than  $m$ (easily proved by induction). In our
situation $S_{p,p}(\emptyset)=\bigcup_{(i,j)}S_{(i,j)}$ is a union
of $n^2$ discrete subsets, so $CB(S_{p,p}(\emptyset))< n^2$.
\end{proof}

\end{document}